\newtheorem{theorem}{Theorem}[section]
\newtheorem{lemma}[theorem]{Lemma}
\newtheorem{proposition}[theorem]{Proposition}
\theoremstyle{definition}
\newtheorem{definition}[theorem]{Definition}
\newtheorem{example}[theorem]{Example}
\theoremstyle{remark}
\newtheorem{remark}[theorem]{Remark}
\newtheorem{openp}[theorem]{Problem}
\newcommand{\C}{\mathscr C}
\newcommand{\Kd}{\mathscr K^d}
\newcommand{\Ktwo}{\mathscr K^2}
\newcommand{\K}{\mathscr K^1}
\newcommand{\Rd}{\mathbb R^d}
\newcommand{\Rtwo}{\mathbb R^2}
\newcommand{\R}{\mathbb R}
\newcommand{\N}{\mathbb N}
\newcommand{\Ne}{\overline{\N}_0} 
\newcommand{\NC}{\mathcal N}
\newcommand{\F}{\mathcal F}
\newcommand{\Z}{\mathbb Z}
\newcommand{\Gd}{\mathbb G^d}
\newcommand{\Sd}{\mathbb S^{d-1}}
\newcommand{\Vd}{\mathbb V^d}
\newcommand{\1}{\mathds 1}
\newcommand{\eps}{\varepsilon}
\newcommand{\mchi}{\mathop{\chi}}
\newcommand{\hs}{\hspace{1pt}}
\newcommand{\Hr}{\mathop{\overset{\smash{\lower0.3ex\hbox{$\,\,
\scriptstyle\circ$}}}{H\!}}{}}
\renewcommand{\phi}{\varphi}
\renewcommand{\d}{\,\mathrm d}
\renewcommand{\emptyset}{\varnothing}
\DeclareMathOperator{\card}{card}
\DeclareMathOperator{\inter}{int}
\DeclareMathOperator{\conv}{conv}
\begin{document}

\title{Integer-valued valuations}

\author[1,2]{Andrii Ilienko}
\author[1]{Ilya Molchanov}
\author[1]{Tommaso Vison\`a}

\affil[1]{University of Bern}
\affil[2]{Igor Sikorsky Kyiv Polytechnic Institute}

\maketitle

\begin{abstract}
  We obtain a complete characterization of planar monotone
  $\sigma$-continuous valuations taking integer values, without
  assuming invariance under any group of transformations. We further
  investigate the consequences of dropping monotonicity or
  $\sigma$-continuity and give a full classification of line
  valuations. We also introduce a construction of the product for
  valuations of this type.
\end{abstract}

\noindent \textbf{Keywords:} valuation, normal cone, polyconvex set,
product of valuations
\\[6pt]
\textbf{MSC2020:} 52A10

\section{Introduction}
\label{sec:introduction}

A valuation $\phi$ is an additive map from the family of compact
convex subsets of a finite-dimensional vector space to an abelian
semigroup. Additivity means that, for any compact convex sets $K$ and
$L$ such that $K\cup L$ is also convex, the following identity holds:
\[
\phi(K \cup L) + \phi(K \cap L) = \phi(K) + \phi(L),
\]  
see \cite[Chapter~6]{S14} for a detailed exposition. Additionally, we
will always include the empty set in the domain of $\phi$ and assume
that $\phi(\emptyset)=0$. Most of the literature on valuations focuses
on valuations with values in the set of real or complex numbers or in
the family of compact convex sets equipped with Minkowski addition.

A common assumption in the study of valuations is their invariance
under a group of transformations. In most cases, valuations are
assumed to be translation invariant, meaning that
$\phi(K + x) = \phi(K)$ for all translations $x$. Alternatively,
valuations are also studied under the assumption of rotation
invariance or invariance under the group of all rigid motions.
Another frequently imposed condition is continuity with respect to the
Hausdorff metric on compact convex sets. This condition is sometimes
relaxed to $\sigma$-continuity, which requires that
$\phi(K_n) \to \phi(K)$ whenever $K_n \downarrow K$.

Let $\Kd$ be the family of convex bodies (i.e., compact convex sets)
in $\R^d$. While the empty set is typically not considered a convex
body, we adopt the convention that it is included in $\Kd$.  By
Hadwiger's theorem, any real-valued continuous and 
invariant under rigid motions valuation on $\Kd$ can be expressed as a
weighted sum of the intrinsic volumes $V_i(K)$, $i = 0, \dots, d$.
Furthermore, McMullen's theorem states that the vector space of all
continuous translation-invariant valuations can be decomposed into a
direct sum of subspaces consisting of valuations that are homogeneous
of order $k=0, \dots, d$.
A more refined result is given by the theorem of Klain and
Schneider. It states that if $\phi$ is a continuous simple
translation-invariant valuation, then
\[
\phi(K) = c V_d(K) + \int_{\Sd} f(u)\d S_{d-1}(K,u),
\]  
where $c \in \mathbb{R}$, $f$ is an odd continuous function, and
$S_{d-1}(K, \cdot)$ stands for the area measure of $K$. Here,
simplicity means that $\phi$ vanishes on all lower-dimensional sets.

In this paper, we consider valuations taking values in the group
$\mathbb{Z}$ of integers under addition. Clearly, the only continuous
valuations with values in $\Z$ are multiples of the Euler
characteristic
\begin{displaymath}
  \mchi(K)=
  \begin{cases}
    1, & K\neq\emptyset,\\
    0, & K=\emptyset. 
  \end{cases}
\end{displaymath}
It is straightforward to see that a sum of Euler characteristics like
\begin{displaymath}
  \phi(K)=\sum_{i=1}^N \mchi(K\cap C_i)
\end{displaymath}
for convex bodies $C_1,\dots,C_N$ defines an integer-valued monotone
$\sigma$-continuous valuation. Due to the intersection operation,
$\phi$ is no longer continuous in the Hausdorff metric. Adding
negative terms to this sum preserves additivity and
$\sigma$-continuity and may still retain the monotonicity property, as
our examples demonstrate.

Our paper focuses on integer-valued monotone $\sigma$-continuous
valuations without imposing \emph{any invariance assumptions} and
provides their complete characterization in dimensions $1$ and $2$. 
In the main results, we establish that each integer-valued, monotone,
and $\sigma$-continuous valuation in dimensions $1$ and $2$ can be
represented as an at most countable sum of Euler characteristics with
weights $\pm 1$. The convex bodies $C_i$ necessarily form a locally
finite family, and the bodies appearing in the negative terms satisfy a
strict admissibility property with respect to the positive ones. In
other words, each integer-valued monotone $\sigma$-continuous
valuation corresponds to a locally finite integer-valued measure on
the family of convex bodies.

A key step in proving the representation involves the support $F$ of a
valuation $\phi$, which is the set of points $x$ such that
$\phi(\{x\}) \geq 1$. We show that each integer-valued
$\sigma$-continuous valuation is uniquely determined by its values on
singletons and that the intersection of $F$ with any convex body
is polyconvex. For the latter, we apply Eggleston's theorem, which
links polyconvexity to the structure of invisible points. The absence
of such a result in dimensions $3$ and higher makes it impossible to
generalize our technique beyond the planar case.

The main result in dimension 2 is proved in
Section~\ref{sec:planar}. In Section~\ref{sec:valuations-line} we
characterise all real-valued valuations on the line. 
In Section~\ref{sec:mult-count}, we introduce countably generated
valuations, which generalize the weighted sums of Euler
characteristics discussed above. We then define the multiplication of
such valuations by arbitrary $\sigma$-continuous ones and examine the
properties of this product. Section~\ref{sec:oproblems} contains a
collection of open problems and conjectures.

\section{Preliminaries on integer-valued valuations}
\label{sec:prel-integ-valu}

  A set
function $\phi:\Kd\to\R$ is called \emph{monotone} if $\phi(K)\le\phi(L)$
whenever $K\subset L$. In particular, this implies nonnegativity:
$\phi(K) \geq \phi(\emptyset) = 0$.

Following \cite[p.~338]{S14}, we call a valuation $\phi$
\emph{$\sigma$-continuous} if
\begin{equation}
  \label{eq:sc}
  \phi(K)=\lim_{n\to\infty}\phi(K_n)
\end{equation}
for any sequence $(K_n)$ of convex bodies such that $K_n\downarrow K$.
First of all, we note that, for integer-valued valuations, it suffices
to check $\sigma$-continuity only at singletons.

\begin{proposition}
  \label{prp:sc}
  Let $\phi$ be an integer-valued valuation on $\Kd$ such that
  \eqref{eq:sc} holds for all $K=\{x\}$, $x\in\Rd$. Then $\phi$ is
  $\sigma$-continuous.
\end{proposition}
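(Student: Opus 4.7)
My plan is to prove the stronger assertion that $\phi(K_n) = \phi(K)$ for all sufficiently large $n$, using induction on $k := \dim K$; the integer-valuedness is crucial, as it upgrades convergence in $\Z$ at each step to eventual equality. The base cases are immediate: for $K = \emptyset$, compactness of $K_1$ and the finite intersection property force $K_n = \emptyset$ from some index on, and for $K = \{x\}$ the hypothesis gives $\phi(K_n) \to \phi(\{x\})$, which on $\Z$ means $\phi(K_n) = \phi(\{x\})$ eventually.

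For the inductive step with $k \geq 1$, I would first treat the case when $K$ is a polytope. Writing $K = H_1^+ \cap \cdots \cap H_m^+$ with each $H_i$ a supporting hyperplane at a facet of $K$, the slivers $L_{i,n} := K_n \cap H_i^{\leq}$ give the convex decomposition
\[
K_n \;=\; K \cup L_{1,n} \cup \cdots \cup L_{m,n},
\]
and every finite intersection $\bigcap_{i \in I} L_{i,n}$ decreases, as $n \to \infty$, to the face $F_I := K \cap \bigcap_{i \in I} H_i$ of dimension at most $k-1$, while $K \cap \bigcap_{i \in I} L_{i,n}$ simply equals $F_I$ for all $n$. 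Groemer's inclusion--exclusion identity together with the induction hypothesis applied to each lower-dimensional $F_I$ then yields
\[
\lim_n \phi(K_n) \;=\; \phi(K) + \sum_{\emptyset \neq J \subset \{1,\dots,m\}} \bigl[(-1)^{|J|+1} + (-1)^{|J|}\bigr] \phi(F_J) \;=\; \phi(K),
\]
since each bracketed coefficient vanishes identically.

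For a general (non-polytopal) $K$, the same valuation identity applied to a single supporting hyperplane $H$ of $K$ with face $F = K \cap H$ gives $\phi(K_n) + \phi(K_n \cap H) = \phi(K_n \cap H^+) + \phi(K_n \cap H^{\leq})$; since $K_n \cap H$ and $K_n \cap H^{\leq}$ both decrease to $F$ of dimension $<k$, the induction hypothesis forces $\phi(K_n) = \phi(K_n \cap H^+)$ eventually. Iterating this trim over finitely many supporting halfspaces of $K$ whose intersection $P$ lies within an $\eps$-neighbourhood of $K$ produces $\phi(K_n) = \phi(K_n \cap P)$ for $n$ large. The main obstacle will be passing from this family of finite polytopal approximations to a conclusion about $\phi(K_n)$ itself, as the stabilisation threshold for each polytope $P$ depends on $P$: a diagonal selection coupling the polytope approximations $P_j \downarrow K$ with the index $n$ is required, together with an argument ensuring that the limiting integer value is identified as $\phi(K)$ rather than some other accumulation value.
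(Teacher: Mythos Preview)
Your induction-on-dimension framework and the base cases are sound, and the polytope step is a correct and pleasant application of Groemer's inclusion--exclusion: the decomposition $K_n = K \cup L_{1,n}\cup\cdots\cup L_{m,n}$ is valid, all the mixed intersections do collapse to proper faces $F_J$ of $K$, and the coefficients $(-1)^{|J|+1}+(-1)^{|J|}$ cancel. So far so good.

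The genuine problem is exactly where you flag it. Your trimming step yields, for each circumscribed polytope $P$ whose facets support $K$, an index $N(P)$ with $\phi(K_n)=\phi(K_n\cap P)$ for $n\ge N(P)$. But any diagonal coupling $M_n=K_n\cap P_{j(n)}$ with $P_j\downarrow K$ is again a sequence of convex bodies decreasing to $K$ with $\phi(M_n)=\phi(K_n)$, so you are back to the very statement you are trying to prove. The polytope case you established does not help here, because the limit of $K_n\cap P_j$ (in $n$, in $j$, or diagonally) is always $K$, never a polytope. In short, the diagonal selection you propose is circular, and I do not see how to identify the constant as $\phi(K)$ without an additional idea.

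The paper sidesteps this entirely. Instead of inducting on $\dim K$ and separating polytopes from general bodies, it argues by contradiction and bisects: assuming $\phi(K_n)\ne\phi(K)$ for infinitely many $n$, one cuts $K$ (and each $K_n$) by a hyperplane $H_1$ through $K$, uses additivity to see that one of the three pieces $K\cap H_1^-$, $K\cap H_1^+$, $K\cap H_1$ still witnesses the discrepancy for infinitely many $n$, and iterates so that the nested pieces shrink to a singleton $\{x\}$. The singleton hypothesis then forces both sides to stabilise at $\phi(\{x\})$, contradicting the persistent inequality. This replaces your dimension descent by a \emph{diameter} descent to a point, which is precisely what lets one invoke the only continuity assumption available. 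If you want to keep your inductive framework, the missing ingredient in the non-polytopal step is essentially this bisection-to-a-point argument; once you import it, the polytope case becomes unnecessary.
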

\begin{proof}
  Assume that the claim is false, and \eqref{eq:sc} fails to hold for
  some $K_n\downarrow K$. Then $\phi(K_n)\ne\phi(K)$ for infinitely
  many $n$. For all such $n$ and for any hyperplane $H_1$ meeting $K$
  and dividing $\Rd$ into closed half-spaces $H_1^-$ and $H_1^+$, we
  have
  \begin{align*}
    \phi(K)&=\phi(K\cap H_1^-)+\phi(K\cap H_1^+)-\phi(K\cap H_1),\\
    \phi(K_n)&=\phi(K_n\cap H_1^-)+\phi(K_n\cap H_1^+)-
    \phi(K_n\cap H_1).
  \end{align*}
  Hence, there is $H_1^\bullet\in\{H_1^-,H_1^+,H_1\}$ such that
  $\phi(K\cap H_1^\bullet)\ne\phi(K_n\cap H_1^\bullet)$ for infinitely
  many $n$. Proceeding with this division process and choosing $H_m$
  and $H_m^\bullet$ at the $m$-th step in such a way that
  $K\cap H_1^\bullet\cap\ldots\cap H_m^\bullet$ shrink to a singleton
  $\{x\}$ as $m\to\infty$, we obtain
  \begin{equation}
    \label{eq:phinephi}
    \phi(K\cap H_1^\bullet\cap\ldots\cap H_m^\bullet)\ne
    \phi(K_n\cap H_1^\bullet\cap\ldots\cap H_m^\bullet)
  \end{equation}
  for each fixed $m$ and infinitely many $n$. However, due to the
  $\sigma$-continuity of $\phi$ at $\{x\}$, both sides of
  \eqref{eq:phinephi} converge to $\phi(\{x\})$ as $m,n\to\infty$
  simultaneously. Since both sides are integer-valued, this
  contradicts \eqref{eq:phinephi}.
\end{proof}

Denote $H_{u,t}^-=\{x\in\Rd\colon\langle u,x\rangle\le t\}$. The
following criterion is useful for verifying the monotonicity of a (not
necessarily additive) $\sigma$-continuous set function.

\begin{proposition}
  \label{prp:m}
  A non-negative $\sigma$-continuous set function $\phi$ on $\Kd$ is
  monotone if and only if $\phi(H_{u,t}^-\cap M)$ is non-decreasing in
  $t$ for each fixed $u\in\Sd$ and $M\in\Kd$.
\end{proposition}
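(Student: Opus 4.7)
The easy direction is the forward one: assuming monotonicity, the inclusion $H_{u,t_1}^-\cap M\subset H_{u,t_2}^-\cap M$ for $t_1\le t_2$ immediately gives $\phi(H_{u,t_1}^-\cap M)\le\phi(H_{u,t_2}^-\cap M)$, so $t\mapsto\phi(H_{u,t}^-\cap M)$ is non-decreasing.

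For the converse, my plan is to exhibit, for any pair $K\subset L$ in $\Kd$, a decreasing sequence of convex bodies $M_0=L\supset M_1\supset\cdots\downarrow K$ in which each successive step is a half-space cut that still contains $K$, and then invoke $\sigma$-continuity. Concretely, I would fix a countable dense sequence $(u_n)$ in $\Sd$ (e.g.\ the unit vectors with rational coordinates), set $t_n=\max_{x\in K}\langle u_n,x\rangle$, and put $M_n=L\cap\bigcap_{i=1}^n H_{u_i,t_i}^-$. Each $M_n$ is then a convex body containing $K$, and the sequence is decreasing.

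The geometric heart of the argument is the identity $\bigcap_n M_n=K$. Since each $M_n\supset K$, only the reverse inclusion needs proof, and I would establish it by strict separation: if $x\notin K$ there is $v\in\Sd$ with $\langle v,x\rangle>\max_{y\in K}\langle v,y\rangle$, and by continuity of the support function together with density of $(u_n)$ the strict inequality persists for some $u_n$ close to $v$, contradicting $x\in H_{u_n,t_n}^-$. Once $M_n\downarrow K$ is in hand, the rest is routine bookkeeping. For each $n$, the hypothesis applied with $M:=M_{n-1}$ makes $t\mapsto\phi(H_{u_n,t}^-\cap M_{n-1})$ non-decreasing; for sufficiently large $t$ the half-space engulfs the bounded body $M_{n-1}$, so the value there equals $\phi(M_{n-1})$, while at $t=t_n$ it equals $\phi(M_n)$. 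Hence $\phi(M_n)\le\phi(M_{n-1})$, iteration gives $\phi(M_n)\le\phi(L)$ for all $n$, and $\sigma$-continuity yields $\phi(K)=\lim_n\phi(M_n)\le\phi(L)$. The only delicate point is verifying $\bigcap_n M_n=K$; after that the result follows by a one-line telescoping together with the definition of $\sigma$-continuity.
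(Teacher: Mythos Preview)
Your proof is correct and follows essentially the same strategy as the paper's: successively cut $L$ down by supporting half-spaces of $K$, use the hypothesis at each step to get $\phi(M_n)\le\phi(M_{n-1})$, and finish with $\sigma$-continuity. The only difference is cosmetic: you make the choice of directions explicit via a dense sequence in $\Sd$ and verify $\bigcap_n M_n=K$ by a separation argument, whereas the paper simply asserts that the supporting hyperplanes can be chosen so that $L_n\downarrow K$.
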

\begin{proof}
  The necessity is clear. To prove sufficiency, let $K \subset L$,
  choose an $x_1\in\partial K$, and draw through $x_1$ a supporting
  hyperplane $H_1$ to $K$. Denote by $L_1$ the part of $L$ cut off by
  $H_1$ and containing $K$. Using the assumption with $u$ orthogonal
  to $H_1$ and $M=L$, we get $\phi(L)\ge\phi(L_1)$. Proceeding with
  this process and choosing $x_n\in\partial K$ and $H_n$ at each step
  in such a way that $L_n\downarrow K$, we obtain, by applying the
  assumption to $M =L_{n-1}$, that $\phi(L_{n-1})\ge \phi(L_n)$. Thus,
  $\phi(L)\ge\phi(L_n)$, and, by $\sigma$-continuity,
  $\phi(L)\ge\phi(K)$.
\end{proof}

Note that, as follows from the proof, this proposition remains valid
even if $\sigma$-continuity is replaced by a significantly weaker
condition $\phi(K)\le\sup_{n\ge1}\phi(K_n)$ for any $K_n\downarrow K$.

We now give a somewhat unexpected property of integer-valued
$\sigma$-continuous valuations, which plays a fundamental role in
what follows.

\begin{proposition}
  \label{prp:comp}
  Let $\phi$ and $\phi'$ be integer-valued $\sigma$-continuous
  valuations on $\Kd$ that coincide on singletons:
  $\phi(\{x\})=\phi'(\{x\})$ for any $x\in\Rd$. Then $\phi=\phi'$.
\end{proposition}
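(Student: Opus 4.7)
The plan is to reduce the statement to showing that an integer-valued $\sigma$-continuous valuation $\psi$ on $\Kd$ which vanishes on every singleton must be identically zero: applying this to $\psi=\phi-\phi'$ gives the conclusion, since the pointwise difference of two $\sigma$-continuous integer-valued valuations is itself such a valuation.

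Suppose toward a contradiction that $\psi(K)\ne 0$ for some $K\in\Kd$. I would then mimic the hyperplane dissection used in the proof of Proposition~\ref{prp:sc}. For any hyperplane $H_1$ meeting $K$, the additivity identity reads
\[
\psi(K)=\psi(K\cap H_1^-)+\psi(K\cap H_1^+)-\psi(K\cap H_1),
\]
and since $\psi(K)$ is a nonzero integer, at least one of the three terms on the right-hand side is a nonzero integer. Choose $H_1^\bullet\in\{H_1^-,H_1^+,H_1\}$ realising this and set $K_1=K\cap H_1^\bullet$. Iterating the construction, and choosing the hyperplanes $H_m$ so that the nested convex bodies $K_m=K\cap H_1^\bullet\cap\dots\cap H_m^\bullet$ shrink to a single point $\{x\}$—for instance by cycling through the coordinate directions and bisecting—I obtain a decreasing sequence $K_m\downarrow\{x\}$ with $\psi(K_m)\ne 0$ for every $m$.

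By $\sigma$-continuity at $\{x\}$, $\psi(K_m)\to\psi(\{x\})=0$; but the sequence $(\psi(K_m))$ consists of nonzero integers, hence $|\psi(K_m)|\ge 1$ for all $m$, which precludes convergence to $0$. This contradiction gives $\psi\equiv 0$, that is, $\phi=\phi'$.

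I do not expect any serious obstacle: the engine of the argument—convergence of integers forcing eventual vanishing—is exactly the mechanism already exploited in Proposition~\ref{prp:sc}, and the hyperplane trisection provides enough flexibility to always keep the value nonzero. The only item requiring mild care is the geometric bookkeeping that ensures the nested intersections actually contract to a point, but this is standard and can be arranged independently of the choices dictated by the non-vanishing condition.
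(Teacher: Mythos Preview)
Your proposal is correct and is essentially the paper's own argument: the paper works directly with the pair $\phi,\phi'$ and tracks the condition $\phi(K_n)\ne\phi'(K_n)$ through the same hyperplane trisection, whereas you first pass to $\psi=\phi-\phi'$ and track $\psi(K_n)\ne0$, which is a purely cosmetic reformulation. The contradiction via integer-valuedness versus $\sigma$-continuity, and the informal handling of the shrinking-to-a-point step, match the paper exactly.
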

\begin{proof}
  We employ reasoning similar to that used in the proof of
  Proposition~\ref{prp:sc}. Suppose the claim is false and
  $\phi(K)\ne\phi'(K)$ for some $K\in\Kd$. Drawing a hyperplane $H$
  that meets $K$ and denoting the closed half-spaces it cuts off by
  $H^-$ and $H^+$, we have
  \begin{align*}
    \phi(K\cap H^-)+\phi(K\cap H^+)-\phi(K\cap H)&=\phi(K)\\
    \ne\phi'(K)&=\phi'(K\cap H^-)+\phi'(K\cap H^+)-\phi'(K\cap H).
  \end{align*}
  Thus, $\phi(K_1)\ne\phi'(K_1)$ for some
  $K_1\in\{K\cap H^-,K\cap H^+,K\cap H\}$.  Proceeding with this
  process so that $K_n\downarrow\{x\}$ for some $x\in\Rd$, we have
  $\phi(K_n)\ne\phi'(K_n)$ for all $n$, while, by $\sigma$-continuity,
  \begin{displaymath}
    \lim_{n\to\infty}\phi(K_n)=\phi(\{x\})=\phi'(\{x\})=
    \lim_{n\to\infty}\phi'(K_n).
  \end{displaymath}
  This is impossible due to the integer-valued property of $\phi$ and
  $\phi'$.
\end{proof}

Proposition~\ref{prp:comp} implies that no simple (i.e., vanishing on
lower-dimensional sets) integer-valued $\sigma$-continuous valuations
exist. Any such valuation must vanish on singletons and is therefore
identically zero.
	
\section{The structure of planar integer-valued valuations}
\label{sec:planar}

In this section, we describe the structure of planar integer-valued
monotone $\sigma$-continuous valuations. Recall that the \emph{normal
  cone} to a closed convex set $C$ at a point $x\in C$ is defined by
\begin{equation}
  \label{eq:nc}
  \NC_C(x)=\big\{u\in\Rd\colon\langle u,y-x\rangle\le0
  \text{ for all }y\in C\big\}
\end{equation}
and adopt the convention $\NC_C(x)=\emptyset$ for $x\notin C$. In
particular,
\begin{align}
  \1_{\NC_C(x)}(0)&=\1_C(x),\label{eq:u0}\\
  \1_{\NC_C(x)}(u)&=\1_C(x)\cdot
  \1\{C\cap\Hr_u^+(x)=\emptyset\},\quad u\ne0,\label{eq:une0}
\end{align}
where
\begin{displaymath}
  \Hr_u^+(x)=\{y\in\Rd\colon\langle u,y-x\rangle>0\}.
\end{displaymath}
This means that $\NC_C(x)$ is empty for $x \notin C$, contains only
$0$ for $x \in \inter C$, and is a non-degenerate closed convex cone
for $x \in \partial C$. Also denote $\Ne=\N\cup\{0,\infty\}$.

\begin{definition}~
  \label{def:lfa}
  \begin{enumerate}[(i)]
  \item A family of $N\in\Ne$ closed convex sets $C_n$ is said to be
    \emph{locally finite} if only finitely many of them hit any fixed
    $K\in\Kd$.
  \item A locally finite family $(C_n^-)$ of cardinality $N^-$ is said
    to be \emph{admissible} with respect to a locally finite family
    $(C_n^+)$ of cardinality $N^+$ if
    \begin{equation}
      \label{eq:ncc}
      \sum_{n=1}^{N^-}\1_{\NC_{C_n^-}(x)}(u)\le
      \sum_{n=1}^{N^+}\1_{\NC_{C_n^+}(x)}(u)
    \end{equation}
    for all $x,u\in\Rd$.
  \end{enumerate}
\end{definition}

In particular, \eqref{eq:ncc} implies that
$\bigcup_n C_n^-\subset \bigcup_n C_n^+$ by letting $u=0$ and using
\eqref{eq:u0}. Further\-more, \eqref{eq:ncc} yields that
$\bigcup_n \partial C_n^-\subset\bigcup_n\partial C_n^+$. Otherwise,
for any $x$ violating this inclusion, the right-hand side of
\eqref{eq:ncc} vanishes for all $u\ne 0$, while the left-hand side
does not for some $u$.

The simplest example of an integer-valued monotone $\sigma$-continuous
valuation is provided by the Euler characteristic
\begin{displaymath}
  \mchi(K)=\1\{K\ne\emptyset\},\quad K\in\Kd.
\end{displaymath}
The following theorem provides a complete description of such
valuations for $d=2$.

\begin{theorem}
  \label{thm:1}
  A function $\phi:\Ktwo\to\Z$ is an integer-valued monotone
  $\sigma$-continuous valuation if and only if there exist
  $N^+,N^-\in\Ne$ and two locally finite families of $N^+$ and $N^-$
  nonempty closed convex sets $C_n^+$ and $C_n^-$ with the latter
  being admissible with respect to the former, such that, for any
  $K\in\Ktwo$,
  \begin{equation}
    \label{eq:repr}
    \phi(K)=\sum_{n=1}^{N^+}\mchi\bigl(K\cap C_n^+\bigr)
    -\sum_{n=1}^{N^-}\mchi\bigl(K\cap C_n^-\bigr).
  \end{equation}
  The families $(C_n^+)$ and $(C_n^-)$ are not uniquely determined:
  $(C_n^+),(C_n^-)$ and $(\widetilde C_n^+),(\widetilde C_n^-)$ define
  the same valuation if and only if
  \begin{equation}
    \label{eq:ind}
    \sum_{n=1}^{N^+}\1_{C_n^+}(x)-
    \sum_{n=1}^{N^-}\1_{C_n^-}(x)=
    \sum_{n=1}^{\widetilde N^+}\1_{\widetilde C_n^+}(x)-
    \sum_{n=1}^{\widetilde N^-}\1_{\widetilde C_n^-}(x)
  \end{equation}
  for all $x\in\Rtwo$. 
\end{theorem}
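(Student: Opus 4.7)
The plan is to prove three things: sufficiency of representations \eqref{eq:repr} with admissible locally finite families; necessity; and the uniqueness criterion \eqref{eq:ind}.

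For sufficiency, local finiteness makes each sum in \eqref{eq:repr} finite at any fixed $K$, so $\phi$ is integer-valued and additive as a finite combination of the elementary valuations $K \mapsto \mchi(K \cap C_n^\pm)$; $\sigma$-continuity follows because each such map is $\sigma$-continuous and only finitely many terms are active near any given $K$. Monotonicity is verified through Proposition~\ref{prp:m}: the function $t \mapsto \phi(H_{u,t}^- \cap M)$ is piecewise constant and can only decrease at times when some $M \cap C_n^-$ is first entered by $H_{u,t}^-$. Analyzing such contact points through the normal-cone structure of $M \cap C_n^-$ and applying the admissibility inequality \eqref{eq:ncc} produces matching $+1$-jumps from the positive family at the same instants, so the function is non-decreasing.

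For necessity, set $f(x) := \phi(\{x\})$; monotonicity gives $f \ge 0$ integer-valued, and combining $\sigma$-continuity on $B(x,1/n) \downarrow \{x\}$ with $f(y) \le \phi(B(x,1/n))$ for $y \in B(x,1/n)$ yields $\limsup_{y \to x} f(y) \le f(x)$, so $f$ is upper semi-continuous and the level sets $L_k := \{x \in \Rtwo : f(x) \ge k\}$ are closed. By Proposition~\ref{prp:comp} the valuation $\phi$ is determined by $f$ on singletons, so it suffices to express $f$ as a signed indicator sum coming from locally finite admissible families; the representation \eqref{eq:repr} then follows by applying that proposition to both sides. Writing $f = \sum_{k \ge 1} \1_{L_k}$ pointwise, the task reduces to decomposing each $\1_{L_k}$ as a finite signed sum of indicators of closed convex sets.

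The main obstacle, and the step where planarity becomes essential, is the geometric claim that $L_k \cap K$ is polyconvex for every $K \in \Ktwo$ and $k \ge 1$. I would invoke Eggleston's theorem, which in $\Rtwo$ characterizes closed polyconvex sets through a visibility condition on invisible point pairs; the verification of this condition for $L_k \cap K$ must exploit the valuation identity and monotonicity of $\phi$, ruling out overly large mutually invisible configurations by constructing from them, via convex hulls of suitable finite subsets and inclusion--exclusion, a convex body whose $\phi$-value contradicts monotonicity or integrality. Once polyconvexity is established, each $\1_{L_k \cap K}$ is a finite signed combination of indicators of convex bodies via standard inclusion-exclusion on a finite convex cover; exhausting $\Rtwo$ by an increasing sequence of compact convex sets and using that only finitely many $L_k$ contribute to any fixed $K$ produces the locally finite families $(C_n^+)$ and $(C_n^-)$. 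Admissibility \eqref{eq:ncc} follows by running the sufficiency argument in reverse: monotonicity of $\phi$ forces the normal-cone inequality at every $(x,u)$. Finally, the uniqueness claim is immediate: evaluating \eqref{eq:repr} at $K = \{x\}$ gives $\phi(\{x\}) = \sum_n \1_{C_n^+}(x) - \sum_n \1_{C_n^-}(x)$, so \eqref{eq:ind} is equivalent to coincidence on singletons, which by Proposition~\ref{prp:comp} is equivalent to $\phi = \phi'$.
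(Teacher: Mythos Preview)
Your overall architecture matches the paper's, but the monotonicity half of sufficiency has a genuine gap. You propose to verify monotonicity via Proposition~\ref{prp:m}: the jump of $t\mapsto\phi(H_{u,t}^-\cap M)$ at $t_0$ equals $q-p$, where $p$ (resp.\ $q$) counts the $C_n^-$ (resp.\ $C_m^+$) whose intersection with $M$ is first hit at $t_0$, and you assert that admissibility yields $p\le q$ by producing ``matching $+1$-jumps''. But \eqref{eq:ncc} is pointwise in $x$, while the $p$ sets $C_n^-$ may be first touched at \emph{different} points $x_n$ on the line $H_{u,t_0}$; a single $C_m^+$ whose face in direction $-u$ contains several of the $x_n$ satisfies the pointwise bound at each yet contributes only once to $q$. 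As a one-dimensional illustration, intervals $G_1^-=\{0\}$, $G_2^-=\{1\}$, $G_1^+=[0,1]$ satisfy $\sum_n\1_{G_n^-}\le\sum_m\1_{G_m^+}$ pointwise, yet $p=2>q=1$; full two-dimensional admissibility does rule out such configurations, but you have not explained how. There is also a boundary issue: when the contact point $x_n$ lies on $\partial M$ one only has $-u\in\NC_{M\cap C_n^-}(x_n)$, not necessarily $-u\in\NC_{C_n^-}(x_n)$, so \eqref{eq:ncc} does not apply at that point. The paper sidesteps both problems by a different construction: it grows $K$ toward $L$ through a finite chain $K=K_0\subset K_1\subset\cdots\subset K_m$ with $K_i=\conv(K_{i-1}\cup S_{\eps_i})$, where $S_{\eps_i}$ is a short segment centred at the point $x_i$ of the not-yet-hit family \emph{closest} to $K_{i-1}$. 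This nearest-point choice and the smallness of $\eps_i$ force every set newly hit at step $i$ to contain the single point $x_i$ and to lie on one side of the supporting line $H_i$, so the increment $\phi(K_i)-\phi(K_{i-1})$ reduces \emph{exactly} to \eqref{eq:ncc} evaluated at $(x_i,u_i)$.

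Your necessity outline via level sets $L_k=\{f\ge k\}$ is a legitimate alternative to the paper's induction on $M_\phi=\sup_x\phi(\{x\})$; the invisibility bound of Lemma~\ref{lem:invis} adapts verbatim to ``$k$-invisibility'' (points with $f\ge k$, separated by points with $f\le k-1$), so each $L_k$ is indeed locally polyconvex. Two later steps, however, need more than you give. First, ``exhausting $\Rtwo$ by an increasing sequence of compact convex sets'' does not by itself produce a single locally finite family: the representations obtained on successive windows need not be compatible. The paper handles this by tiling $\Rtwo$ with unit squares and running inclusion--exclusion over squares, edges, and vertices. Second, ``running the sufficiency argument in reverse'' is not a proof of admissibility; the paper instead shows directly that a failure of \eqref{eq:ncc} at some $(x,u)$ allows one to construct $K\subset B_\eps(x)$ with $\phi(K)>\phi(B_\eps(x))$, contradicting monotonicity. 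The uniqueness claim via Proposition~\ref{prp:comp} is correct and matches the paper.
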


The non-uniqueness of the representation \eqref{eq:repr} is confirmed
by the following example.

\begin{example}
  \label{ex:non-unique}
  Let $C_1$ and $C_2$ be two convex bodies such that $C_1\cup C_2$ is
  convex. Then the valuation $\phi(K)=\mchi(K\cap (C_1\cup C_2))$ can
  be alternatively represented as
  \begin{displaymath}
    \phi(K)=\mchi(K\cap C_1)+\mchi(K\cap C_2)-\mchi(K\cap(C_1\cap
    C_2)), 
  \end{displaymath}
  which holds since $\mchi(K\cap\cdot)$ is a valuation.  One can
  also notice that both sides agree on singletons, so that the
  equality follows from Proposition~\ref{prp:comp}.
\end{example}

The following examples demonstrate that not all monotone valuations
can be constructed using only $C_n^+$.

\begin{example}
  \label{ex:star}
  Let $a$, $b$ and $c$ be segments positioned as shown in
  Figure~\ref{fig:1}(a) with $O$ denoting their intersection point.
  
  \begin{figure}[h]
    \centering
    \subfigure[]{
      \includegraphics[scale=0.15]{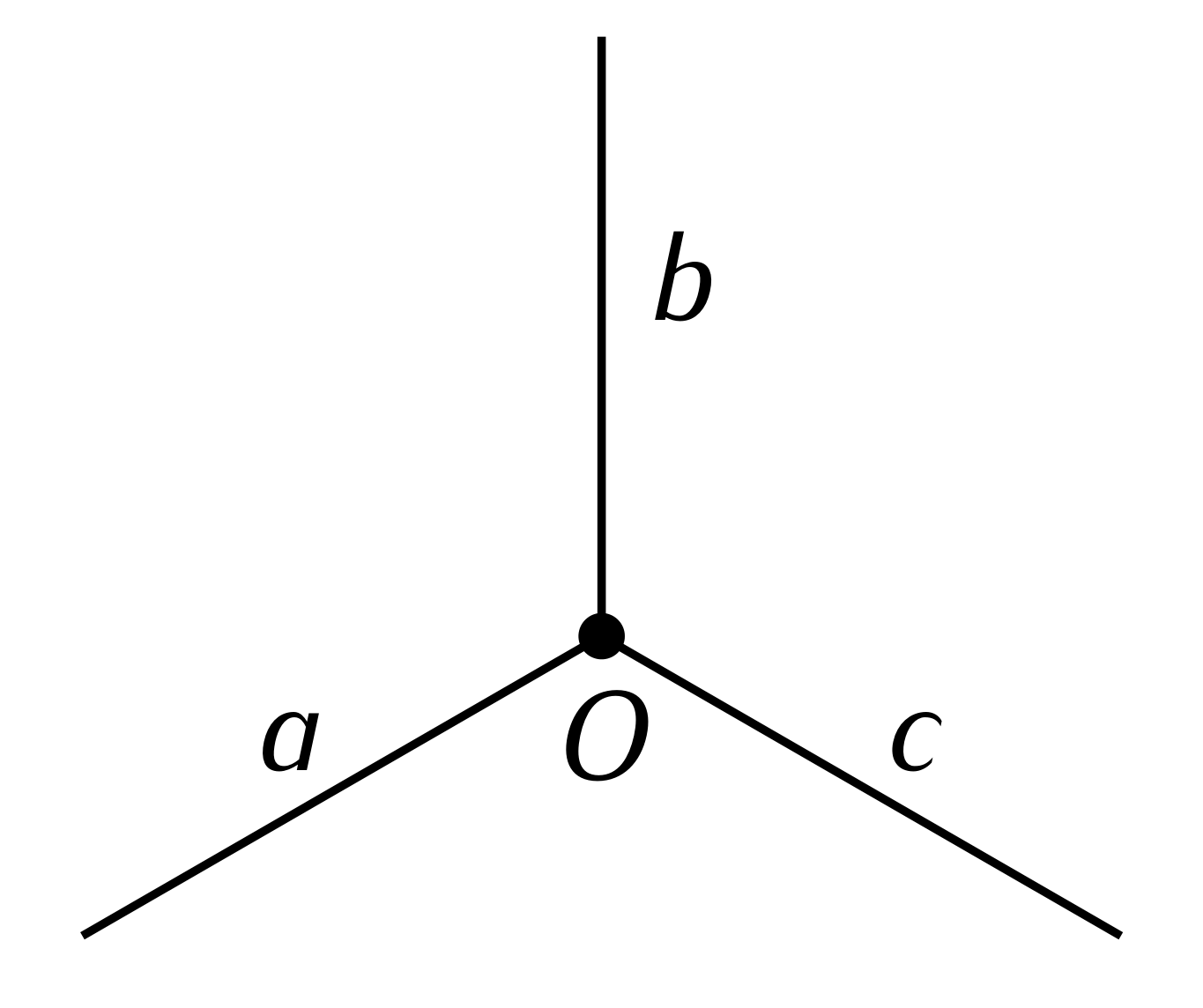}
    }\hspace{60pt}
     \subfigure[]{
      \includegraphics[scale=0.15]{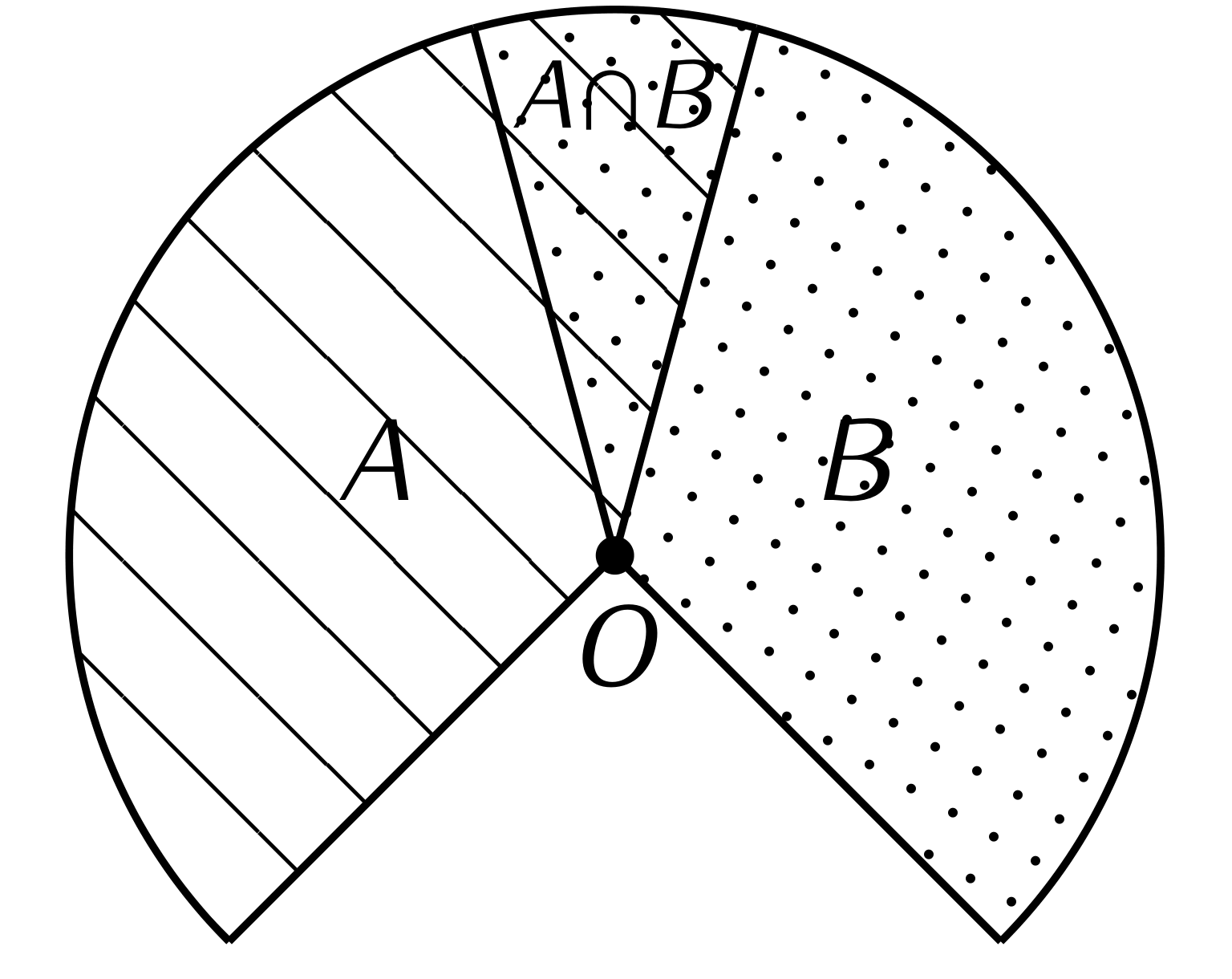}
      }
    \caption{}
    \label{fig:1}
  \end{figure}

  Consider the valuation
  \begin{displaymath}
    \phi(K)=\mchi(K\cap a)+\mchi(K\cap b)+
    \mchi(K\cap c)-\mchi(K\cap\{O\}),\quad K\in\Ktwo.
  \end{displaymath}
  This valuation is integer-valued $\sigma$-continuous and monotone.
  To prove monotonicity, note that $\phi(K)=0$ means that $K$ and
  $a\cup b\cup c$ are disjoint, and $\phi(K)=1$ means that $K$
  intersects exactly one of these segments. Thus, for $\phi(K)\leq 1$
  and $K'\subset K$, we have $\phi(K')\leq\phi(K)$. If $\phi(K)=2$,
  the latter inequality holds because $\phi(K')\leq 2$ for all
  $K'\in\Ktwo$.

  Another way to prove the monotonicity of $\phi$ is to verify the
  admissibility of $(\{O\})$ with respect to $(a,b,c)$. From
  \eqref{eq:nc}, it easily follows that $\NC_{\{O\}}(O)=\R^2$, and for
  any $s\in\{a,b,c\}$, $\NC_s(O)$ is a closed half-plane that does not
  contain $\inter s$, with its boundary passing through $O$ and
  orthogonal to $s$. Since the union of these three half-planes is the
  entire plane, \eqref{eq:ncc} holds for $x=O$. At all other points,
  \eqref{eq:ncc} holds trivially, as its left-hand side vanishes for
  any $u$. Hence, $\phi$ is monotone by Theorem~\ref{thm:1}.
\end{example}

\begin{example}
  Now consider the valuation
  \begin{displaymath}
    \phi=\mchi(\cdot\cap A)+\mchi(\cdot\cap B)+
    \mchi(\cdot\cap\{O\})-\mchi(\cdot\cap A\cap B),
  \end{displaymath}
  see Figure~\ref{fig:1}(b). It is also integer-valued
  $\sigma$-continuous and monotone; both proofs of monotonicity are
  similar to those in Example~\ref{ex:star}. Note that without the
  term $\mchi(\cdot\cap{O})$, the valuation would be
  non-monotone. Altering the positions of the lines bordering
  $A\cap B$ (while maintaining their nonempty intersection) does not
  change the valuation but leads to its different representation.
\end{example}

We will precede the proof of Theorem~\ref{thm:1} with two auxiliary
lemmas. For the first one, we call a point set $P$ an
\emph{invisibility set} if $\phi(\{x\})\ge1$ for each $x\in P$, and,
for any $x,y\in P$, there exists a point $z\in(x,y)$ such that
$\phi(\{z\})=0$. We denote the convex hull of a set $P$ by $\conv P$
and write $\card P$ for the cardinality of $P$. Note that the bound on
the cardinality of $P$ in the following result is apparently far from
optimal one, but it suffices for our purposes.

\begin{lemma}
  \label{lem:invis}
  Let $\phi$ be an integer-valued monotone valuation on $\Ktwo$ and
  $n\in\N$. If $P$ is an invisibility set with
  $\card P\ge 4^n$, then $\phi(\conv P)>\frac n2$. 
\end{lemma}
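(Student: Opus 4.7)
I would proceed by induction on $n$. The base case $n=0$ is immediate: $\card P\ge 1$ provides an $x\in P$ with $\phi(\{x\})\ge 1$, so monotonicity gives $\phi(\conv P)\ge 1>0$.

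For the inductive step with $\card P\ge 4^n$ and $n\ge 1$, the key geometric move would be to locate two lines $\ell_1,\ell_2\subset\Rtwo$ meeting at a point $o$ with $\phi(\{o\})=0$ that partition $P$ into four sub-invisibility sets $P_i=P\cap Q_i$, one per closed quadrant $Q_i$ of $\ell_1,\ell_2$, with $\card P_i\ge 4^{n-1}$. Such a configuration should be attainable because the invisibility hypothesis supplies a $\phi=0$ witness in every open segment $(p,q)$ with $p,q\in P$, and a ham-sandwich-type bisection argument in $\Rtwo$ permits balancing the four-way partition to within $O(1)$ about a suitably chosen central $\phi=0$ point; the acknowledged looseness of the bound $4^n$ comfortably absorbs these constant losses.

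With $K=\conv P$ and $K_i=K\cap Q_i$, applying the valuation property first across $\ell_1$ and then across $\ell_2$ within each half, and noting that the convex segments $K\cap H_1^-\cap\ell_2$ and $K\cap H_1^+\cap\ell_2$ share exactly the point $o\in K\cap\ell_1\cap\ell_2$, yields
\[
\phi(K)=\sum_{i=1}^{4}\phi(K_i)-\phi(K\cap\ell_1)-\phi(K\cap\ell_2)-\phi(\{o\}).
\]
The inductive hypothesis gives $\phi(K_i)\ge\phi(\conv P_i)>(n-1)/2$, which sharpens by integrality to $\phi(K_i)\ge\lfloor (n-1)/2\rfloor+1$. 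Substituting $\phi(\{o\})=0$ together with the monotonicity bounds $\phi(K\cap\ell_j)\le\phi(K)$ rearranges the identity to $3\phi(K)\ge 4(\lfloor (n-1)/2\rfloor+1)$, and a parity check in both cases shows that the resulting integer lower bound for $\phi(K)$ strictly exceeds $n/2$, closing the induction.

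The main obstacle is the geometric step in the second paragraph: simultaneously guaranteeing a balanced four-way partition of $P$ and the vanishing $\phi(\{o\})=0$ at the intersection. Once this configuration is secured, the valuation identity combined with monotonicity and the inductive hypothesis finish the argument cleanly.
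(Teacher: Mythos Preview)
Your arithmetic and the inclusion--exclusion identity are fine, but the proof has a genuine gap exactly where you flag it: there is no mechanism for producing two lines through a \emph{common} point $o$ with $\phi(\{o\})=0$ that also balance the four quadrants. The invisibility hypothesis only hands you $\phi=0$ witnesses on segments $(p,q)$ with $p,q\in P$; a ham--sandwich or centerpoint argument will balance the partition, but gives you no control over $\phi$ at the resulting intersection point. And the condition $\phi(\{o\})=0$ is not cosmetic: without it the identity yields only $4\phi(K)\ge\sum_i\phi(K_i)$, which fails to close the induction for even $n$. Since the constant slack in $4^n$ is multiplicative per step while the balancing defect is additive, ``absorbing constant losses'' does not rescue the vanishing requirement at $o$.

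The paper's argument avoids this obstruction by a different geometric decomposition. It argues by contradiction (assuming $\phi(\conv P)\le n/2$) and draws a \emph{single} line $H$ through two points $x,y\in P$ that bisects $P$; invisibility then guarantees a point $z\in(x,y)$ with $\phi(\{z\})=0$ on $H$ itself. From $z$ two segments are drawn to $\partial\conv P$ so as to split each half of $P$ in two, giving four polygons $Q^{\pm\pm}$ with $\ge 4^{n-1}$ points each. The key observation is that one pair of adjacent pieces, say $Q^{--}\cup Q^{+-}$, is convex, so the valuation identity is applied only across $H$ to that pair, and only \emph{two} of the four pieces enter the estimate. The shared segment $[a,z]\subset H$ is then bounded via additivity on $[a,b]=H\cap\conv P$ together with $\phi(\{z\})=0$, $\phi(\{y\})\ge1$, and the contradiction hypothesis, yielding $\phi([a,z])\le n/2-1$. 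Combining this with the inductive bound on $\phi(Q^{--})$ and $\phi(Q^{+-})$ produces the contradiction $n/2>n/2$. The essential difference from your plan is that the paper never needs a second balanced line through the $\phi=0$ point; it trades your four-piece direct estimate for a two-piece estimate plus a segment bound extracted from the contradiction hypothesis.
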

\begin{proof}
  We proceed by induction on $n$. For $n=1$, the claim is clear, since
  $\phi(\conv P)\geq \phi(\{x\})=1$ for each $x\in P$.  Assume it
  holds for $n-1$. Arguing by contradiction, suppose that
  $\phi(\conv P)\le\frac n2$. Since $\phi$ is $\sigma$-continuous, it
  is always possible to perturb the points of $P$, so that
  $\phi(\conv P)$ does not change and the points of $P$ are in general
  position.

  As before, for a line $H$, we denote by $H^-$ and $H^+$ the two
  closed half-planes into which $H$ divides $\Rtwo$. Draw $H$ in such
  a way that $\card(P\cap H^-)\ge 2^{2n-1}$ and
  $\card(P\cap H^+)\ge 2^{2n-1}$. With a slight adjustment, $H$ can
  always be made to pass through some $x,y\in P$. Mark $z\in(x,y)$
  with $\phi(\{z\})=0$, and denote by $a,b$ the intersection points of
  $H$ and $\partial\conv P$. Connect $z$ by line segments to some
  $u,v\in\partial\conv P$ in such a way as to divide
  $(\conv P)\cap H^-$ and
  $(\conv P)\cap H^+$ into four closed convex polygons
  $Q^{--},Q^{-+},Q^{+-},Q^{++}$ with $\card Q^{ij}\ge 4^{n-1}$,
  $i,j\in\{-,+\}$, see Figure~\ref{fig:points} for $n=2$.

  \begin{figure}[h]
    \centering \includegraphics[scale=0.35]{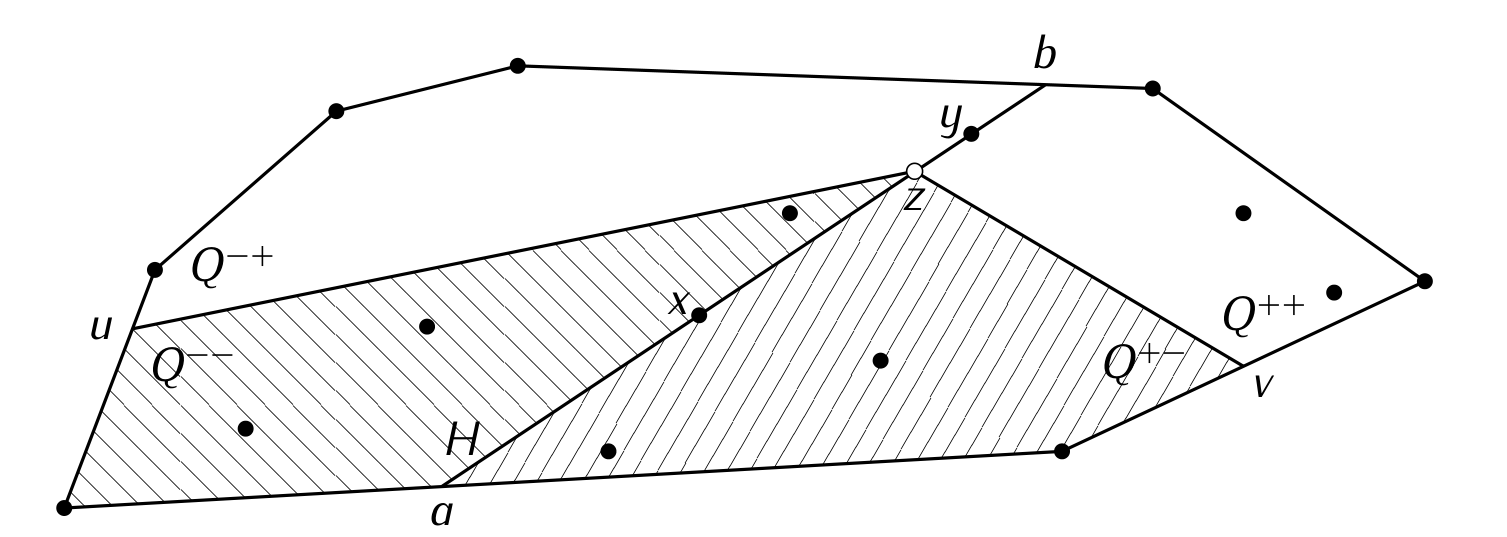}
    \caption{}
    \label{fig:points}
  \end{figure}

  One of the polygons $Q^{--}\cup Q^{+-}$ or $Q^{-+}\cup Q^{++}$ is
  convex, depending on whether the angle $\angle uzv$ is $\le\pi$ or
  $\ge\pi$. Assume the former. Since
  \begin{displaymath}
    \phi([a,z])=\phi([a,b])+\phi(\{z\})-\phi([z,b])
    \le\phi(\conv P)-\phi(\{y\})\le\frac n2-1,
  \end{displaymath}
  applying the induction hypothesis to $Q^{--}$ and $Q^{+-}$ yields
  the contradiction:
  \begin{multline*}
    \frac n2\ge\phi(\conv P)\ge\phi(Q^{--}\cup Q^{+-})\\=
    \phi(Q^{--})+\phi(Q^{+-})-\phi([a,z])>2\,\frac{n-1}2-
    \left(\frac n2-1\right)=\frac n2.\qedhere
  \end{multline*}
\end{proof}

Recall that a set is said to be \emph{polyconvex} if it is a finite
union of (not necessarily disjoint) convex sets, which are called
\emph{convex components}. In particular, the
empty set is also considered as polyconvex.

\begin{lemma}
  \label{lem:polyc}
  Let $\phi$ be an integer-valued monotone $\sigma$-continuous
  valuation defined on closed convex subsets of some $W\in\Ktwo$. Then
  its support
  \begin{displaymath}
    F=\{x\in W\colon\phi(\{x\})\ge1\}
  \end{displaymath}
  is polyconvex, and all its convex components are closed.
\end{lemma}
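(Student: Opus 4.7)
The plan is to invoke Eggleston's theorem on polyconvexity: a compact planar set is polyconvex if and only if the cardinalities of its invisibility sets (in the sense defined just before Lemma~\ref{lem:invis}) are uniformly bounded. To apply it to $F$, two things must be checked---that $F$ is closed (hence compact, as $F\subset W$), and that invisibility sets contained in $F$ have bounded cardinality. The final item, closedness of the convex components, will then follow by passing to closures.

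For closedness of $F$, I would take any sequence $x_n\in F$ with $x_n\to x$. Since $W$ is closed, $x\in W$. The decreasing sequence $K_m=\overline{B}(x,1/m)\cap W$ of closed convex sets shrinks to $\{x\}$, so by $\sigma$-continuity $\phi(K_m)\to\phi(\{x\})$. For each $m$, some $x_n$ lies in $K_m$, and monotonicity yields $\phi(K_m)\ge\phi(\{x_n\})\ge1$. Passing to the limit gives $\phi(\{x\})\ge1$, i.e.\ $x\in F$.

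For the uniform bound on invisibility sets, set $M=\phi(W)$, a finite non-negative integer. For any invisibility set $P\subset F$, convexity of $W$ forces $\conv P\subset W$, and monotonicity gives $\phi(\conv P)\le M$. Lemma~\ref{lem:invis} then implies $\card P<4^{2M}$, since otherwise $\phi(\conv P)>M$. (Strictly speaking, Lemma~\ref{lem:invis} is stated for valuations defined on all of $\Ktwo$, but its proof uses $\phi$ only on convex subsets of $\conv P\subset W$, so it applies here.) Eggleston's theorem now guarantees that $F$ is polyconvex: $F=\bigcup_{i=1}^k C_i$ for some convex sets $C_i$. Since $F$ is closed, each closure $\overline{C_i}$ is a closed convex subset of $\overline{F}=F$, so replacing every $C_i$ by $\overline{C_i}$ produces a decomposition of $F$ into finitely many closed convex components.

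The main technical point is the closedness of $F$, which is the step that genuinely uses $\sigma$-continuity; the core idea---converting the quantitative bound of Lemma~\ref{lem:invis} into polyconvexity via Eggleston's theorem---is already signposted in the introduction, and the lemma's statement is tailored so that the bound $\phi(\conv P)\le M$ automatically forces the cardinality of $P$ to be finite.
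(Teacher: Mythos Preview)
Your proposal is correct and follows essentially the same route as the paper: first establish closedness of $F$ via $\sigma$-continuity and monotonicity, then use Lemma~\ref{lem:invis} to bound the size of invisibility sets (equivalently, to show that $F$ is $m$-convex for $m=4^{2\phi(W)}$), apply Eggleston's theorem, and finally pass to closures of the convex pieces. The only cosmetic differences are that the paper phrases the Eggleston input as $m$-convexity rather than bounded invisibility sets, and is slightly terser in the closedness step; your added remark that Lemma~\ref{lem:invis} only uses $\phi$ on subsets of $\conv P\subset W$ is a welcome clarification.
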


Before proceeding to the proof, we recall a fact from convex
geometry. For $m\ge2$, a set $S\subset\Rtwo$ is called
\emph{$m$-convex} if, for any $m$ distinct points in $S$, at least one
of the line segments connecting them lies in $S$. In particular,
$2$-convex sets are just convex. According to Eggleston’s theorem
\cite{E74}, a closed $m$-convex set is polyconvex. Note that an
extensive literature has been devoted to deriving upper bounds on the
number of convex components, see \cite{BK76}, \cite{MV99},
\cite{PS90}, etc.

\begin{proof}[Proof of Lemma \ref{lem:polyc}]
  We first note that $F$ is closed. Indeed, if $F\ni x_k\to x$, then,
  for some closed convex neighbourhood $V_x$ of $x$ and some $k\ge1$,
  we have by $\sigma$-continuity and monotonicity that
  \begin{displaymath}
    \phi(\{x\})=\phi(V_x)\ge\phi(\{x_k\})\ge1.
  \end{displaymath}
  This implies $x\in F$.

  Take any set $P$ of $m=4^{2\phi(W)}$ points from $F$. At least one
  of the line segments connecting them lies entirely in $F$:
  otherwise, they would form an invisibility set, and by
  Lemma~\ref{lem:invis} applied to the valuation
  $\phi(W\cap\cdot)$, we would arrive at the contradiction
  $\phi(W)\ge\phi(\conv P)>\phi(W)$. Hence, by Eggleston’s theorem,
  $F=\bigcup_{i=1}^l K_i$ for some $l\ge0$ and convex $K_i$. Taking
  the closures of both sides of this equality and recalling that $F$
  is closed, we arrive at the desired representation.
\end{proof}

\begin{proof}[Proof of Theorem \ref{thm:1}]~\\[-10pt]

  \noindent
  \emph{Sufficiency.} The set function $\phi$ given by \eqref{eq:repr}
  is an integer-valued $\sigma$-continuous valuation, since it is a
  sum of such valuations and, due to local finiteness, this sum has
  only finitely many nonzero terms for each $K$. The only thing that
  remains to be proved is its monotonicity.

  Taking $u=0$ in \eqref{eq:ncc} and using \eqref{eq:u0}, we
  have
  \begin{equation}
    \label{eq:nns}
    \phi(\{x\})=\sum_{n=1}^{N^+}\1_{C_n^+}(x)-
    \sum_{n=1}^{N^-}\1_{C_n^-}(x)\ge0,\quad x\in\R^2.
  \end{equation}
  We will now show that $K\subset L$ implies $\phi(K)\le\phi(L)$. In
  particular, combined with \eqref{eq:nns}, this ensures
  $\phi(K)\ge 0$ for any $K$.

  Denote by $\F_0$ the family of all sets $C_n^+$ and $C_n^-$ which
  appear in \eqref{eq:repr}. 
  Fix $K_0=K\subset L$, and define $\F_1$ to be the family of all sets
  from $\F_0$ that hit $L$ while missing $K_0$. Due to local
  finiteness, $\F_1$ is finite, and it is possible to find a $\delta>0$
  such that the family of sets from $\F_0$ which hit
  $L+B_\delta(0)$ while missing $K_0$ is exactly $\F_1$. Here $+$
  stands for the Minkowski addition. Now replace all sets $C_n^{\pm}$
  from the family $\F_1$ by their intersections with $L$. This does
  not affect the values of $\phi$ on $L$ and its subsets.  

  We claim that there exist
  \begin{enumerate}[1)]
  \item a point $x_1\in L$ on the boundary of some set from $\F_1$,
  \item a supporting line $H_1$ at $x_1$ to this set that separates
    its interior from $K_0$,
  \item a segment $S_{\eps_1}$ on $H_1$ of small length $2\eps_1$ with
    $\eps_1<\delta/\card\F_1$ centered at $x_1$, such that
    $S_{\eps_1}$ hits the same sets from $\F_0$ as $\{x_1\}$ and
    $\conv(K_0\cup S_{\eps_1})\setminus S_{\eps_1}$ does not intersect
    any set from $\F_1$,
  \end{enumerate}
  see Figure~\ref{fig:K01}, where, for simplicity, the sets $C_n^+$
  and $C_n^-$, shown in gray, are depicted as disjoint. The above
  construction can be carried out by choosing $x_1$ to be the
  minimum point of the function $x\mapsto \inf_{z\in K_0}\|z-x\|$ with
  the minimal value $r$ over
  all points $x$ from the union of all sets in the family $\F_1$. In the
  case of multiple minimizers, any of them can be chosen.  Note as
  well that this minimizer may belong to several other sets, say
  $C_{i_1},\dots,C_{i_p}$, from $\F_1$. The $r$-parallel set $K_0^r$
  is smooth at $x$, so there is a unique supporting line which then
  becomes $H_1$. Since any other set from $\F_1$ is further away from
  $K_0$ than $r$, none of them intersects $\conv(K_0\cup S_{\eps_1})$
  for a sufficiently small segment $S_{\eps_1}$ on $H_1$ centered at
  $x_1$. Furthermore, since $\partial K_0^r$ is smooth at $x_1$, no
  set from $C_{i_1},\dots,C_{i_p}$ intersects
  $\conv(K_0\cup S_{\eps_1})\setminus S_{\eps_1}$.

  \begin{figure}[h]
    \centering
    \includegraphics[scale=0.2]{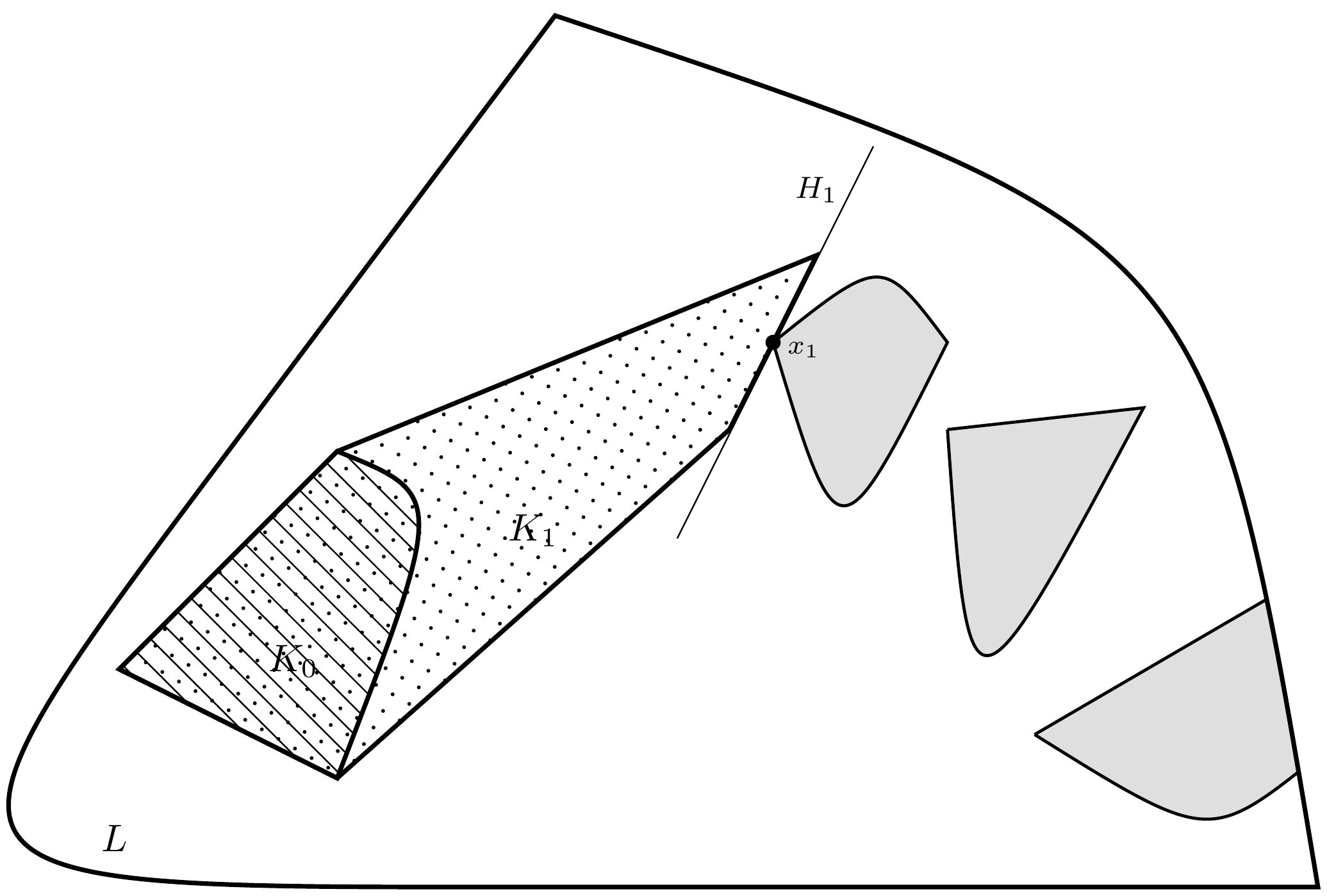}
    \caption{}
    \label{fig:K01}
  \end{figure}

  Denote $K_1=\conv(K_0\cup S_{\eps_1})$
  and observe that $K_0\subset K_1\subset L+B_{\eps_1}(0)$.  
  Let $\F_2$ be the family of
  sets from $\F_0$ (actually, from $\F_1$) that hit $L$ while missing $K_1$.
  Now repeat the above process first with $K_1,\F_2,x_2,H_2,\eps_2$
  instead of $K_0,\F_1,x_1,H_1,\eps_1$, and, similarly, at the
  subsequent steps. This process terminates at step
  $m\leq \card\F_1$, when $\F_{m+1}=\emptyset$. Then
  $K_m\subset L+B_\eps(0)$ with $\eps=\sum \eps_i<\delta$.
  Then the sets $K_m$, $L$, and
  $L+B_\eps(0)$ hit the same sets from the collection $\F_0$.
  Hence, $\phi(K_m)=\phi(L)$, and, to prove monotonicity,
  it remains to show that $\phi(K_{i-1})\le\phi(K_i)$ for each
  $i=1,\ldots,m$.

  Since $\conv(K_{i-1} \cup S_{\eps_i}) \setminus S_{\eps_i}$ does not
  hit any set from $\F_i$, the increment of $\phi$ between $K_{i-1}$
  and $K_i$ is determined exclusively by those $C_n^+$ and $C_n^-$
  that hit $S_{\eps_i}$ but miss the open half-plane
  $\Hr_{u_i}^+(x_i)$ bounded by $H_i$ and containing $K_{i-1}$. By the
  choice of $\eps_i$, such a set hits $S_{\eps_i}$ if and only if it
  contains $x_i$. Hence,
  \begin{align*}
    \phi(K_i)-\phi(K_{i-1})=
    \sum_{n=1}^{N^+}\1_{C_n^+}(x_i)
    &\cdot \1\{C_n^+\cap\Hr_{u_i}^+(x_i)=\emptyset\}\\
    &-\sum_{n=1}^{N^-}\1_{C_n^-}(x_i)\cdot
      \1\{C_n^-\cap\Hr_{u_i}^+(x_i)=\emptyset\},
  \end{align*}
  which is non-negative by \eqref{eq:une0} and \eqref{eq:ncc}.\\[-10pt]

  \noindent 
  \emph{Necessity.}  We first prove that \eqref{eq:repr} holds with
  some locally finite families $(C_n^+)$ and $(C_n^-)$ and afterwards
  address the admissibility of $(C_n^-)$ with respect to $(C_n^+)$. To
  begin with, assume that $\phi$ is supported by a subset of a fixed set
  $W\in\Ktwo$. By monotonicity,
  \begin{displaymath}
    M_{\phi}=\sup_{x\in W}\phi(\{x\})\le\phi(W)<\infty.
  \end{displaymath}
  We will proceed by induction on $M_{\phi}$.

  If $M_{\phi}=0$, then, comparing $\phi$ with the zero valuation
  using Proposition~\ref{prp:comp}, we get $\phi=0$, so that the claim
  holds with $N^+=N^-=0$.

  Now let $M_{\phi}=k$, $k\ge1$, and suppose the claim has been
  established for any valuation $\phi'$ on $W$ such that
  $M_{\phi'}\le k-1$. By Lemma~\ref{lem:polyc}, the support of $\phi$
  is $F=\bigcup_{i=1}^lK_i$ for some $l\ge1$ and closed convex sets
  $K_1,\dots,K_l\subset W$. Denote by
  $\phi\arrowvert_L=\phi(\cdot\cap L)$ the restriction of $\phi$ to
  $L\in\Ktwo$, and consider the valuation
  \begin{equation}
    \label{eq:phiast}
    \phi^\ast=\mchi\!\arrowvert_F+\sum_{r=1}^l (-1)^{r-1}
    \sum_{1\le i_1<\ldots<i_r\le l}
    (\phi-\mchi)\arrowvert_{K_{i_1}\cap\ldots\cap K_{i_r}}.
  \end{equation}
  The valuations $\phi$ and $\phi^\ast$ coincide on singletons: if
  $x\in F$ belongs to exactly $m$ sets from $K_1,\ldots,K_l$ and
  $\phi(\{x\})=p$, then
  \begin{displaymath}
    \phi^\ast(\{x\})=1+(p-1)\sum_{r=1}^m(-1)^{r-1}\binom mr=p.
  \end{displaymath}
  Hence, by Proposition~\ref{prp:comp}, $\phi=\phi^\ast$.  The
  valuation
  $\phi'=(\phi-\mchi)\arrowvert_{K_{i_1}\cap\ldots\cap K_{i_r}}$ is
  integer-valued monotone $\sigma$-continuous, and
  $M_{\phi'}\le k-1$. Thus, by the induction hypothesis, it is of the
  required form. Substituting the expression \eqref{eq:repr} for
  $\phi'$ into \eqref{eq:phiast} yields the required representation
  for $\phi^\ast=\phi$.

  Now consider an integer-valued monotone $\sigma$-continuous
  valuation $\phi$ on the entire $\Ktwo$. For $i,j\in\Z$, denote
  \begin{displaymath}
    \begin{aligned}
      Q_{i,j} & = [i,i+1]\times[j,j+1],
      & \qquad E_{i,j} & = [i,i+1]\times\{j\}, \\
      E'_{i,j} & = \{i\}\times[j,j+1],
      & \qquad V_{i,j} & = \{i\}\times\{j\}.
    \end{aligned}
  \end{displaymath}
  Note that $\Rtwo=\bigcup_{i,j\in\Z} Q_{i,j}$, and double, triple,
  and quadruple intersections of distinct components take the form of
  $E_{i,j}$, $E'_{i,j}$, or $V_{i,j}$, while the intersections of
  higher orders are empty. The restriction of $\phi$ to
  any of these sets is an integer-valued monotone $\sigma$-continuous
  valuation as well. Hence, by the reasoning above, these restrictions
  are of the required form. Applying an analogue of \eqref{eq:phiast}
  to the countable collection of sets $Q_{i,j}$ with intersections
  beyond the fourth order being empty, we obtain the required form of
  $\phi$.

  We now prove that $(C_n^-)$ is admissible with respect to
  $(C_n^+)$. Since $\phi(\{x\})\ge0$ for any $x$, we have
  \begin{displaymath}
    p^-=\sum_{n=1}^{N^-}\1_{C_n^-}(x)\le \sum_{n=1}^{N^+}\1_{C_n^+}(x)=p^+,
  \end{displaymath}
  which, by \eqref{eq:u0}, implies \eqref{eq:ncc} for $u=0$.  Fix
  some $x$ and $u\ne0$, and let
  \begin{equation}
    \label{eq:pq}
    q^+=\sum_{n=1}^{N^+}\1_{C_n^+}(x)\cdot\1\{C_n^+\cap \Hr_u^+(x)=\emptyset\},\qquad
    q^-=\sum_{n=1}^{N^-}\1_{C_n^-}(x)\cdot\1\{C_n^-\cap \Hr_u^+(x)=\emptyset\}.	
  \end{equation} 
  If \eqref{eq:ncc} is violated for $x$ and $u$, then by
  \eqref{eq:une0}, we have $q^+-q^-<0$. It follows from \eqref{eq:pq}
  that
  \begin{equation}
  \begin{multlined}
    \label{eq:pqphi}
    \sum_{n=1}^{N^+}\1_{C_n^+}(x)\cdot\1\{C_n^+\cap
    \Hr_u^+(x)\ne\emptyset\}
    -\sum_{n=1}^{N^-}\1_{C_n^-}(x)
    \cdot\1\{C_n^-\cap \Hr_u^+(x)\ne\emptyset\}\\=(p^+-q^+)-(p^--q^-)=
    (p^+-p^-)-(q^+-q^-)>(p^+-p^-)=\phi(\{x\}).
  \end{multlined}
  \end{equation}
  Due to local finiteness, there are a disk $B_\eps(x)$ that
  hits the same $C_n^+$ and $C_n^-$ as $\{x\}$ and a closed
  convex set $K$, approximating $B_\eps(x)\cap\Hr_u^+(x)$ from the
  inside, that hits the same $C_n^+$ and $C_n^-$ as
  $B_\eps(x)\cap\Hr_u^+(x)$. Hence, the left-hand side of
  \eqref{eq:pqphi} is $\phi(K)$, while the right-hand side is
  $\phi(B_\eps(x))$, which contradicts monotonicity.

  To prove the final claim of the theorem, it suffices to note that
  \eqref{eq:ind} means the equality of the corresponding valuations on
  singletons. By Proposition~\ref{prp:comp}, this implies their
  overall equality.
\end{proof}

\begin{remark}
\label{rem:compact}
Note that, in fact, we constructed the representation \eqref{eq:repr}
with components $C_n^+$ and $C_n^-$ that are not only closed and
convex but also bounded, meaning they belong to $\Ktwo$. However,
using unbounded components is often convenient. For example, for the
Euler characteristic $\mchi$, we can simply take $N^+=1$, $N^-=0$ and
$C_1^+=\Rtwo$.
\end{remark}

The proof that the admissibility of $(C_n^-)$ with respect to
$(C_n^+)$ is both necessary and sufficient for the monotonicity of
$\phi$ extends to any dimension along the same lines. In other words,
if a valuation $\phi$ on $\Kd$ has the form \eqref{eq:repr} with some
locally finite families $(C_n^+)$ and $(C_n^-)$, then it is monotone
if and only if $(C_n^-)$ is admissible with respect to $(C_n^+)$ in
the sense of Definition~\ref{def:lfa}\hs(ii). However, the necessity
of the representation \eqref{eq:repr} beyond the two-dimensional
setting remains an open question: the most critical part of the proof
relies on an application of Eggleston's theorem, and little is known
about its validity in higher dimensions.

The $\sigma$-continuity condition imposed in Theorem~\ref{thm:1} is
crucial. If it is omitted, the class of integer-valued monotone
valuations expands. This is illustrated by the following examples,
which work in spaces of any dimension.

\begin{example}
  \label{ex:open}
  If $N^-=0$, then the right-hand side of \eqref{eq:repr}, written in
  the form of
  \begin{displaymath}
    \phi=\sum_{n=1}^{N^+}\1\{\cdot\cap C_n^+\ne\emptyset\},
  \end{displaymath}
  defines an integer-valued monotone valuation even if the sets
  $C_n^+$ are not necessarily closed.
\end{example}

\begin{example}
  \label{ex:faces}
  For $u\in\Sd$, denote by $H_u$ the supporting hyperplane of $K$ with
  outer normal $u$ and set $K_u=K\setminus(K\cap H_u)$. Thus, $K_u$ is
  $K$ with one exposed face removed. For $N^+\in\Ne$, a set
  $\{u_n\}\subset\Sd$ and a locally finite set
  $\{x_n\} \subset\mathbb{R}^d$, both of cardinality $N^+$, define
  \begin{equation}
    \label{eq:repr2}
    \phi(K)=\sum_{n=1}^{N^+}\1\{x_n\in K_{u_n}\},
    \quad K\in\Kd.
  \end{equation}
  The monotonicity of \eqref{eq:repr2} is clear. To prove additivity,
  we first note that, for $K,L\in\Kd$ with convex union,
  \begin{equation*}
    (K\cup L)_u=K_u\cup L_u\quad\text{and}
    \quad(K\cap L)_u=K_u\cap L_u.
  \end{equation*}
  The only two non-trivial inclusions here are the direct one in the
  first equality and the reverse one in the second. Let $H_{u,x}^+$
  stand for the open half-space with inner normal $u$ whose boundary
  contains $x$.  If $x\in(K\cup L)_u$, then $x$ belongs to, say, $K$,
  and there exists $y\in H_{u,x}^+\cap(K\cup L)$. If $y\in K$, we have
  $x\in K_u$. If, however, $y\in L$, then, due to convexity of
  $K\cup L$, there exists $z\in[x,y]\cap K\cap L$. If $z=x$, we have
  $x,y\in L$, and thus $x\in L_u$. If $z\ne x$, then $x,z\in K$, and
  so $x\in K_u$. This proves the direct inclusion in the first
  equality.

  Now let $x\in K_u\cap L_u$. Then $x\in K\cap L$ and there exist
  $y_1\in H_{u,x}^+\cap K$, $y_2\in H_{u,x}^+\cap L$. Again, due to
  convexity of $K\cup L$, there is $z\in[y_1,y_2]\cap K\cap L$. Hence,
  $z\in H_{u,x}^+\cap(K\cap L)$, and so $x\in(K\cap L)_u$. This proves
  the reverse inclusion in the second inequality.

  The additivity of each summand in \eqref{eq:repr2} follows from the
  identity
  \begin{align*}
    \1\{x_n\in K_{u_n}\}+\1\{x_n\in L_{u_n}\}
    &=\1\{x_n\in K_{u_n}\cup L_{u_n}\}+
      \1\{x_n\in K_{u_n}\cap L_{u_n}\}\\
    &=\1\{x_n\in(K\cup L)_{u_n}\}+
      \1\{x_n\in(K\cap L)_{u_n}\}.
  \end{align*}
  The general case follows by linearity and, if necessary, by passing
  to the limit.
\end{example}

It is interesting to note that, in the one-dimensional case, all
discontinuous integer-valued monotone valuations are fully
characterized by a combination of these two examples. This follows
from Theorem~\ref{thm:2}\hs(iv) in the next section.  On the other
hand, for $\sigma$-continuous valuations with the monotonicity
condition dropped, the representation \eqref{eq:repr} may also fail
even in the one-dimensional setting, as demonstrated by
Examples~\ref{ex:nm1} and~\ref{ex:nm2} in the next section. This
confirms that the Jordan decomposition does not hold for
integer-valued $\sigma$-continuous valuations.

\section{Valuations on the line}
\label{sec:valuations-line}

In this section, we will explore the structure of valuations on $\K$,
with a focus on integer-valued valuations that possess some additional
properties such as monotonicity or $\sigma$-conti\-nuity. In
particular, it will be shown that, in the one-dimensional analogue of
Theorem~\ref{thm:1}, it is always possible to set $N^-=0$, thus
restricting the right-hand side of \eqref{eq:repr} to positive terms
only. The one-dimensional case is, of course, much simpler than the
planar one, which allows us to provide in the following theorem a
complete characterization of all one-dimensional valuations with
certain properties.

We will use the double angle brackets
$\langle\!\langle p,q\rangle\!\rangle$, $-\infty\le p\le q\le\infty$,
to denote any of the four types of intervals: closed, semi-open, or
open.  If $p=-\infty$ or $q=\infty$, the interval on the corresponding
side can only be open. If $p=q$, then
$\langle\!\langle p,q\rangle\!\rangle=[p,p]=\{p\}$.

\begin{theorem}
  \label{thm:2}
  Let $\phi$ be an arbitrary valuation on $\K=\{[a,b]\colon a\le b\}$.
  Then there exist two unique functions $f,g\colon\R\to\R$ with
  $f(0)=0$ such that $\phi([a,b])=g(b)-f(a)$ for any $a\le
  b$. Conversely, any such pair of functions defines a
  valuation. Moreover,
  \begin{enumerate}[(i)]
  \item $\phi$ is integer-valued if and only if $f$ and $g$ are
    integer-valued;
  \item $\phi$ is monotone if and only if $f$ and $g$ are
    non-decreasing and $f\le g$;
  \item $\phi$ is $\sigma$-continuous if and only if $f$ is
    left-continuous and $g$ is right-continuous;
  \item $\phi$ is integer-valued and monotone if and only if there
    exist $N_1,N_2,N_3\in\Ne$, a locally finite family of $N_1$ intervals
    $\langle\!\langle p_n,q_n\rangle\!\rangle$, and two locally finite
    sets of $N_2$ (resp., $N_3$) points $r_n$ (resp., $s_n$), such
    that, for each $[a,b]\in\K$,
    \begin{equation}
    \begin{aligned}
      \label{eq:phi'}
      \phi([a,b])=&\sum_{n=1}^{N_1}
      \1\bigl\{[a,b]\cap\langle\!\langle
      p_n,q_n\rangle\!\rangle\ne\emptyset\bigr\}\\
      +&\sum_{n=1}^{N_2}\1\{r_n\in(a,b]\}+
      \sum_{n=1}^{N_3}\1\{s_n\in[a,b)\};
    \end{aligned}
    \end{equation}
  \item $\phi$ is integer-valued monotone and $\sigma$-continuous if
    and only if there exist $N\in\Ne$ and a locally finite family of
    $N$ closed intervals $[p_n,q_n]$, such that, for each
    $[a,b]\in\K$,
    \begin{equation*}
      \phi([a,b])=\sum_{n=1}^N
      \1\bigl\{[a,b]\cap[p_n,q_n]\ne\emptyset\bigr\}.
    \end{equation*}
  \end{enumerate}
\end{theorem}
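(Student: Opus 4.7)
My plan is to establish a bijection between valuations $\phi$ on $\K$ and pairs $(f, g)$ with $f(0) = 0$ satisfying $\phi([a, b]) = g(b) - f(a)$, then read off parts (i)--(iii) from this correspondence, and finally tackle (iv) via a jump-matching argument from which (v) follows as a direct specialization. For the bijection itself, the valuation identity applied to $[a, 0] \cup [0, b] = [a, b]$ with intersection $\{0\}$ (for $a \le 0 \le b$) and the analogous decompositions when $a, b$ lie on the same side of $0$ forces the definition $g(b) = \phi([0, b])$ for $b \ge 0$ and $f(a) = \phi(\{0\}) - \phi([a, 0])$ for $a \le 0$, with symmetric extensions $g(b) = \phi(\{b\}) + \phi(\{0\}) - \phi([b, 0])$ for $b < 0$ and $f(a) = \phi([0, a]) - \phi(\{a\})$ for $a > 0$; a short case analysis verifies $\phi([a, b]) = g(b) - f(a)$, and uniqueness under $f(0) = 0$ is immediate. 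The converse direction is trivial: $\phi([a, b]) := g(b) - f(a)$ is always additive, since the identity on $[a, b] \cup [b', c] = [a, c]$ (intersection $[b', b]$) reduces to a tautology. Parts (i)--(iii) then drop out directly: (i) by testing at $a = 0$, $b = 0$, and $a = b$; (ii) from non-negativity of $\phi([a, b]) - \phi([a', b']) = (g(b) - g(b')) + (f(a') - f(a))$ for $a \le a' \le b' \le b$, which is equivalent to $f, g$ non-decreasing and $f \le g$; (iii) because nested closed intervals $[a_n, b_n] \downarrow [a, b]$ necessarily have $a_n \uparrow a$ and $b_n \downarrow b$, translating $\sigma$-continuity into left-continuity of $f$ and right-continuity of $g$.

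For (iv), given integer-valued non-decreasing $f, g$ with $f \le g$, I would decompose each into its jump structure: at every point $x$ write the right-jump $\beta_R(x) = g(x) - g(x-)$ and left-jump $\beta_L(x) = g(x+) - g(x)$, and analogously $\alpha_R, \alpha_L$ for $f$. Inspection shows that each of the six allowed terms in (iv) corresponds to a pair of jump-units: a right-$g$ at $p$ with a left-$f$ at $q \ge p$ gives $\mchi(\cdot \cap [p, q])$ (degenerating to $\{r\}$ when $p = q$); right-$g$ with right-$f$ gives $[p, q)$ for $p < q$ or the point term $\1\{r \in (a, b]\}$ for $p = q$; left-$g$ with left-$f$ gives $(p, q]$ or $\1\{s \in [a, b)\}$; and left-$g$ with right-$f$ requires $p < q$ strictly and gives $(p, q)$. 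The only forbidden degenerate pair is left-$g$ with right-$f$ at the same point. I would pair jump-units by sweeping left to right, with four phases at each jump point $x$: (0) add right-$g$-units, (1) match right-$f$-units with inventory, (2) add left-$g$-units, (3) match left-$f$-units. The inventory sizes after the phases equal $g(x) - f(x-)$, $g(x) - f(x)$, $g(x+) - f(x)$, $g(x+) - f(x+)$, all non-negative by $f \le g$, so the algorithm never stalls; the phase ordering precisely prevents formation of the forbidden pair. Summing the resulting indicators yields $g(b) - f(a) = \phi([a, b])$.

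Part (v) specializes immediately: $\sigma$-continuity forces $\alpha_R \equiv 0$ and $\beta_L \equiv 0$, so only the first pair type occurs, and the representation collapses to $\sum \mchi(\cdot \cap [p_n, q_n])$. Local finiteness is automatic in both (iv) and (v), since each building block contributes at most $1$ to $\phi([\alpha, \beta]) < \infty$. The converse directions are routine: each summand is itself a monotone (and, for (v), $\sigma$-continuous) integer-valued valuation, and local finiteness ensures only finitely many are nonzero on any fixed $[a, b]$. The main technical challenge I expect is the combined verification that the four-phase sweep is well-defined (no stalling, guaranteed by $f \le g$ at each stage) and produces pairs of the correct type (no forbidden left-$g$/right-$f$ coincidence). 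A secondary bookkeeping issue is unbounded tails: unmatched $g$-units at the end of the sweep (when $g(+\infty) > f(+\infty)$) and baseline discrepancies $g(-\infty) > f(-\infty)$ must be sent to $\pm\infty$, producing half-line or full-line intervals that remain within the admissible cardinalities $N_i \in \Ne$.
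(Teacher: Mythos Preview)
Your approach is essentially the paper's: the $(f,g)$ representation is identical (same formulas for $f,g$), and for (iv) your four-phase jump-matching is a bookkeeping variant of the paper's bracket-encoding scheme (they list the jumps as symbols $[x,(x,x),x]$ in that tie-order and match first-opening to nearest-closing; your phase order $\beta_R,\alpha_R,\beta_L,\alpha_L$ achieves the same goal of excluding the forbidden degenerate pair via $f\le g$), with (v) then read off exactly as you do.

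The one place the paper is more careful is in launching the sweep. Your ``baseline discrepancy $g(-\infty)-f(-\infty)$'' can be $+\infty$ or indeterminate---e.g.\ take $f(x)=2\lfloor x\rfloor$, $g(x)=\lfloor x\rfloor$ for $x\le 0$ and $f=g=0$ for $x>0$, so that $g(x)-f(x)=-\lfloor x\rfloor\to\infty$---and a left-to-right FIFO pass with an infinite initial inventory of ``$(-\infty$'' openings then fails to produce a locally finite family (every $[n$ stays forever unmatched). The paper sidesteps this by first normalising to $\phi'=\phi(\cdot-c)-m$ with $m=\min_x\phi(\{x\})$, which forces $f(0)=g(0)=0$, and then running the matching separately on $[0,\infty)$ and on $(-\infty,0]$, so that each half-line sweep has a clean start at $0$; with that device in place your argument goes through unchanged.
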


Note that, unlike the terms in the last two sums of \eqref{eq:phi'},
$\1\{t\in(a,b)\}$ is not a valuation: additivity is violated, e.g.,
for $K=[t-1,t]$ and $L=[t,t+1]$. Moreover, \eqref{eq:phi'} can be seen
as a combination of Examples~\ref{ex:open} and~\ref{ex:faces} in the
one-dimensional setting.

\begin{proof}[Proof of Theorem~\ref{thm:2}]
  The difference $g(b)-f(a)$ clearly satisfies additivity and so
  defines a valuation. Conversely, for the valuation $\phi$, define
  \begin{equation}
  \begin{aligned}
    \label{eq:fg}
    f(x)&=
    \begin{cases}
      \phi([0,x])-\phi(\{x\}),&x\ge0,\\\phi(\{0\})-\phi([x,0]),&x<0,
    \end{cases}\\
    g(x)&=
    \begin{cases}
      \phi([0,x]),&x\ge0,\\\phi(\{x\})+\phi(\{0\})-\phi([x,0]),
      &x<0.
    \end{cases}
  \end{aligned}
  \end{equation}
  Then $f(0)=0$ and, for $0\le a\le b$, we have by additivity
  \begin{displaymath}
    \phi([a,b])=\phi([0,b])-\phi([0,a])+\phi(\{a\})=g(b)-f(a).
  \end{displaymath}
  The other two cases, $a\le b<0$ and $a<0\le b$, are treated similarly.

  In (i), the ``only'' part follows from \eqref{eq:fg}, while the
  ``if'' part from $\phi([a,b])=g(b)-f(a)$.  The same equality easily
  yields both parts in (ii) and (iii).

  The ``if'' part in (iv) follows from Examples~\ref{ex:open}
  and~\ref{ex:faces}. We now prove the ``only if'' part in (iv).  Let
  $\phi(\{c\})=m=\min_{x\in\R}\phi(\{x\})$ and
  $\phi'=\phi(\cdot-c)-m$. Then $\phi'$ is an integer-valued monotone
  valuation with $\phi'(\{0\})=0$. Hence, for its functions $f$ and
  $g$, we have $f(0)=g(0)=0$. It follows from the previous claims that
  $f$ and $g$ are non-decreasing step functions with integer jumps,
  and $f\le g$. To each point $x>0$ where $g$ has a left
  discontinuity, i.e., $g(x)-g(x-)\ge1$, we associate a pattern of
  $g(x)-g(x-)$ consecutive identical entries \lq\lq$[x$\rq\rq. In a
  similar manner, handle the right discontinuities of $g$, denoting
  their positions as \lq\lq$(x$\rq\rq, $x\ge0$. Then proceed similarly
  with the left and right discontinuities of $f$, using the notation
  \lq\lq$x)$\rq\rq\ and \lq\lq$x]$\rq\rq, respectively. Finally,
  combine these patterns in increasing order of $x$ into a single, at
  most countable sequence. In the case of patterns with the same $x$,
  they should be arranged in the following order:
  $[x\ldots(x\ldots x)\ldots x]\ldots$. The resulting sequence encodes
  both $f$ and $g$ on $[0,\infty)$.

  For example, for the functions
  \begin{gather*}
    f=2\1_{(0,2)}+3\1_{\{2\}}+5\1_{(2,4]}+7\1_{(4,6]}+
    10\1_{(6,\infty)},\\
    g=3\1_{(0,1]}+4\1_{(1,2]}+5\1_{(2,4)}+6\1_{\{4\}}+
    7\1_{(4,6)}+8\1_{\{6\}}+12\1_{(6,\infty)},
  \end{gather*}
  using this algorithm, we obtain the following sequence:
  \begin{displaymath}
    (0\;(0\;(0\;0]\;0]\;(1\;(2\;2)\;2]\;2]\;[4\;(4\;4]\;4]\;[6\;(6\;
    (6\;(6\;(6\;6]\;6]\;6].
  \end{displaymath}
  Now, for the first opening bracket, find the nearest closing one on
  the right, note the resulting interval, remove the used pair from
  the sequence and repeat the procedure. If there are not enough
  closing brackets, use $\infty)$ as many times as needed. In the
  above example, we arrive at the following set of
  intervals:
  \begin{displaymath}
    (0,0],\;(0,0],\;(0,2),\;(1,2],\;(2,2],\;[4,4],\;(4,4],\;[6,6],\;
    (6,6],\;(6,6],\;(6,\infty),\;(6,\infty).
  \end{displaymath}
  The resulting intervals can be real, such as the four types of
  $\langle\!\langle p,q\rangle\!\rangle$, or virtual, such as $[r,r)$
  and $(s,s]$. A virtual interval $(t,t)$ is impossible by
  construction due to the condition $f\le g$. Along the same lines, a
  similar list of real and virtual intervals can be constructed on
  $(-\infty,0]$.

  Consider the valuation $\phi''$ constructed according to
  \eqref{eq:phi'}, by incorporating the real inter\-vals
  $\langle\!\langle p_n,q_n\rangle\!\rangle$ into the terms of the
  first sum, and the points $r_n$, $s_n$ defining the virtual
  intervals into the terms of the second and third sums. Calculating
  by \eqref{eq:fg} the functions $f_n$ and $g_n$ corresponding to all
  six types of terms in \eqref{eq:phi'}, it is easy to see that the
  step functions $f$ and $g$ for $\phi''$ have the same positions and
  structure of discontinuities as those for $\phi'$. Hence, these
  functions coincide, and so $\phi''=\phi'$.  Thus, $\phi'$ takes the
  form of \eqref{eq:phi'}.  Shifting $\phi'$ to the right by $c$ and
  adding $m=m\1\{[a,b]\cap(-\infty,\infty)\ne\emptyset\}$, we arrive
  at the required representation for $\phi$.

  The ``if'' part in (v) is clear. The ``only if'' part follows from
  (iv) and the fact that all other terms in \eqref{eq:phi'} are easily
  seen not to be $\sigma$-continuous.
\end{proof}

We can now give the examples announced at the end of
Section~\ref{sec:planar}, which demonstrate that, even in the
one-dimensional case, the representation \eqref{eq:repr} may fail if
the monotonicity condition on the valuation is dropped.

\begin{example}
  \label{ex:nm1}
  Let
  \begin{displaymath}
    f=0\quad\text{and}\quad
    g=\sum_{n=1}^\infty\1_{\left[\frac{2n-1}{2n},\frac{2n}{2n+1}\right)}.
  \end{displaymath}
  By Theorem~\ref{thm:2}, $\phi([a,b])=g(b)-f(a)=g(b)$ defines an
  integer-valued $\sigma$-continuous valuation on $\K$. Since the pair
  $(f_{p,q},g_{p,q})=(0,\1_{[p,q)})$ corresponds to the
  valuation
  \begin{displaymath}
    \phi_{p,q}([a,b])=\1\{b\in[p,q)\}=\1\{[a,b]\cap[p,q]\ne
    \emptyset\}-\1\{[a,b]\cap\{q\}\ne\emptyset\},
  \end{displaymath}
  we arrive at the representation
  \begin{equation}
    \begin{aligned}
      \label{eq:pm_repr}
      \phi([a,b])=\sum_{n=1}^\infty\1\{[a,b]\cap C_n^+\ne\emptyset\}&-
      \sum_{n=1}^\infty\1\{[a,b]\cap C_n^-\ne\emptyset\}\\&=
      \sum_{n=1}^\infty\mchi\{[a,b]\cap C_n^+\}-
      \sum_{n=1}^\infty\mchi\{[a,b]\cap C_n^-\},
    \end{aligned}
  \end{equation}
  where $C_n^+=\left[\frac{2n-1}{2n},\frac{2n}{2n+1}\right]$,
  $C_n^-=\left\{\frac{2n}{2n+1}\right\}$, and $\infty-\infty=0$ by
  convention. The families $(C_n^+)$ and $(C_n^-)$ are not locally
  finite.
\end{example}

\begin{example}
  \label{ex:nm2}
  Let $f=0$ and
  $g(x)=\bigl\lfloor\frac 1{1-x}\bigr\rfloor\cdot\1_{(-\infty,1)}$,
  $x\in\mathbb R$. Since
  $g=\sum_{n=1}^\infty\1_{\left[\frac{n-1}n,1\right)}$, the above
  reasoning leads to \eqref{eq:pm_repr} with
  $C_n^+=\left[\frac{n-1}n,1\right]$, $C_n^-=\{1\}$ for all $n$, and
  the same convention. This time, the families $(C_n^+)$ and $(C_n^-)$
  are neither locally finite, nor is the sum in \eqref{eq:ind} even
  well defined.
\end{example}

In both of the above examples, there are no other locally finite
families $(\widetilde C_n^+)$ and $(\widetilde C_n^-)$. Indeed,
denoting $y_k=\frac k{k+1}$, we have by \eqref{eq:pm_repr} that
$\phi(\{y_k\})\ne \phi(\{y_{k+1}\})$ for $k\ge1$. Hence, on each
interval $[y_k,y_{k+1}]$, there must be a point from some
$\partial\widetilde C_n^+$ or $\partial\widetilde C_n^-$. This
contradicts the local finiteness.

\section{Multiplication of countably generated
  valuations}
\label{sec:mult-count}

Theorems~\ref{thm:1} and~\ref{thm:2}\hs(v) lead us to the following
general definition.

\begin{definition}
  \label{def:cg}
  A valuation $\phi$ on $\Kd$ is called \emph{countably generated} if
  there exist $N\in\Ne$, a locally finite family of $N$ nonempty
  closed convex sets $C_n$, and a set of $N$ real numbers $\alpha_n$
  such that
  \begin{equation}
    \label{eq:sum-form}
    \phi(K)=\sum_{n=1}^N\alpha_n\mchi
    \bigl(K\cap C_n\bigr),\quad K\in\Kd.
  \end{equation}
\end{definition}

While in Definition~\ref{def:cg} the sets $C_n$ were assumed to be
only closed and convex, an equivalent representation with compact
$C_n$ follows from the inclusion-exclusion argument used in the proof
of Theorem~\ref{thm:1}.

The above theorems show that any integer-valued monotone
$\sigma$-continuous valuation on $\K$ or $\Ktwo$ is countably
generated with all $\alpha_n=1$ if $d=1$ and $\alpha_n=\pm 1$ if
$d=2$.

Any countably generated valuation is clearly $\sigma$-continuous.  Let
$\Vd$ stand for the vector space of all $\sigma$-continuous valuations
on $\Kd$ equipped with the natural operations of addition and
multiplication by real numbers, and denote by $\Gd$ its subspace of
countably generated valuations. Note that elements of $\Gd$ are
completely determined by their values on singletons: if
$\phi,\phi'\in\Gd$ are defined by $N,(\alpha_n),(C_n)$ and
$N',(\alpha'_n),(C'_n)$, respectively, then $\phi=\phi'$ if and only
if 
\begin{equation}
  \label{eq:phi=phi'}
  \sum_{n=1}^N\alpha_n\1_{C_n}=\sum_{n=1}^{N'}\alpha'_n\1_{C'_n}.
\end{equation}
This can be proved along the same lines as Proposition~\ref{prp:comp}.

For a countably generated valuation, \emph{multiplication} by a
$\sigma$-continuous valuation can be defined as follows.  For
$\psi\in\Vd$ and $\phi\in\Gd$ given by \eqref{eq:sum-form}, define
\begin{equation}
  \label{eq:int}
  (\phi\cdot\psi)(K)=\sum_{n=1}^N\alpha_n\psi(K\cap C_n),
  \quad K\in\Kd.
\end{equation}
The terms on the right-hand side are well defined, since
$K\cap C_n\in\Kd$ for all $n$. If $N=\infty$, only a finite number of
them are non-zero due to the local finiteness of $(C_n)$. Finally, the
value of the sum on the right-hand side of \eqref{eq:int} does not
depend on the specific choice of $N,(\alpha_n),(C_n)$ in the
representation of $\phi$ by \eqref{eq:sum-form}. Indeed, this sum is
the Groemer integral of $\sum_{n=1}^N\alpha_n\1_{K\cap C_n}$ with
respect to $\psi$, see \cite{G78}. This integral is well defined for
$\psi\in\Vd$ by Theorem~3 in the same paper. It remains to note that,
for another set $N',(\alpha'_n),(C'_n)$ corresponding to $\phi$, we
have
\begin{displaymath}
  \sum_{n=1}^{N'}\alpha'_n\1_{K\cap C'_n}=
  \1_K\cdot\sum_{n=1}^{N'}\alpha'_n\1_{C'_n}=
  \1_K\cdot\sum_{n=1}^N\alpha_n\1_{C_n}=
  \sum_{n=1}^N\alpha_n\1_{K\cap C_n}
\end{displaymath}
by \eqref{eq:phi=phi'}.

In the following proposition, we list the basic properties of this product.

\begin{proposition}
  \label{prp:int}
  For fixed $K\in\Kd$,
  \begin{enumerate}[(i)]
  \item $(\phi,\psi)\mapsto(\phi\cdot\psi)(K)$ is a bilinear map from
    $\Gd\times\Vd$ to $\R$;
  \item $(\phi\cdot\psi)(K)=(\psi\cdot\phi)(K)$ on $\Gd\times\Gd$;
  \item $(\chi\cdot\psi)(K)=\psi(K)$, where $\chi$ is the Euler
    characteristic.
  \end{enumerate}
  For fixed $\phi\in\Gd$ and $\psi\in\Vd$,
  \begin{enumerate}[(i)]
    \setcounter{enumi}{3}
  \item $(\phi\cdot\psi)(\cdot)$ is a $\sigma$-continuous valuation,
    that is, this operation acts from $\Gd\times\Vd$ into $\Vd$,
    moreover, $(\phi\cdot\psi)(\{x\})=\phi(\{x\})\hs\psi(\{x\})$ for
    each $x\in\Rd$;
  \item if $\psi\in\Gd$, then $\phi\cdot\psi\in\Gd$ as well, more
    precisely, if $\phi$ is defined by $N,(\alpha_n),(C_n)$, and
    $\psi$ by $N',(\alpha'_n),(C'_n)$, then $\phi\cdot\psi$ is defined
    by
    $NN',(\alpha_n{\alpha_m}\hspace{-5pt}'\hspace{3pt}),
    (C_n\cap{C_m}\hspace{-5pt}'\hspace{3pt})$.
  \end{enumerate}
\end{proposition}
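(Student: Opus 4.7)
The plan is to verify the five claims in an order that makes later parts free of repetition. I would start with (iii): using the representation of the Euler characteristic $\mchi$ from Remark~\ref{rem:compact}, namely $N=1$, $\alpha_1=1$, $C_1=\Rd$, substitution into \eqref{eq:int} immediately gives $(\mchi\cdot\psi)(K)=\psi(K\cap\Rd)=\psi(K)$. For (i), linearity in the $\psi$-slot is evident from the right-hand side of \eqref{eq:int}; linearity in the $\phi$-slot relies on the well-definedness of the product under change of representation (established just before the proposition), since concatenating representations of $\phi_1,\phi_2\in\Gd$ realizes $\phi_1+\phi_2$ and scaling all coefficients by $c$ realizes $c\phi$.

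For (iv), I would verify the valuation identity, $\sigma$-continuity, and the singleton formula separately. The key observation for additivity is that if $K\cup L$ is convex and $C_n$ is convex, then $(K\cup L)\cap C_n=(K\cap C_n)\cup(L\cap C_n)$ is the intersection of two convex sets, hence convex; applying the valuation identity of $\psi$ termwise and summing (finitely many nonzero terms, by local finiteness) yields additivity of $\phi\cdot\psi$. For $\sigma$-continuity, if $K_k\downarrow K$, then $K_k\cap C_n\downarrow K\cap C_n$ for each $n$, and only the finitely many $C_n$ meeting $K_1$ can contribute to any $(\phi\cdot\psi)(K_k)$; the $\sigma$-continuity of $\psi$ then lets one pass the limit inside a finite sum. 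The singleton identity follows from $\mchi(\{x\}\cap C_n)=\1_{C_n}(x)$ together with \eqref{eq:sum-form}.

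For (v), with $\psi$ represented by $N',(\alpha'_m),(C'_m)$, I would expand $\psi(K\cap C_n)=\sum_m\alpha'_m\mchi(K\cap C_n\cap C'_m)$ inside \eqref{eq:int} to obtain
\begin{equation*}
(\phi\cdot\psi)(K)=\sum_{n=1}^{N}\sum_{m=1}^{N'}\alpha_n\alpha'_m\,\mchi\bigl(K\cap(C_n\cap C'_m)\bigr).
\end{equation*}
Each $C_n\cap C'_m$ is closed and convex, and the doubly indexed family is locally finite, since for any $K\in\Kd$ only finitely many $C_n$ and only finitely many $C'_m$ meet $K$. This produces the representation asserted in (v) and shows $\phi\cdot\psi\in\Gd$. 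Claim (ii) is then an immediate consequence: the displayed expression is visibly symmetric in the roles of $\phi$ and $\psi$.

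The argument is essentially bookkeeping, and I expect no serious obstacle; the only steps that use more than the definition are the convexity of $(K\cap C_n)\cup(L\cap C_n)$ in the additivity part of (iv) and the reduction to a finite sum via local finiteness, which allows the interchange of limit and summation in the $\sigma$-continuity part. The latter is the main, albeit modest, point to be careful about.
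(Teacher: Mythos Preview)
Your proposal is correct and essentially matches the paper's proof. The only organizational differences are that the paper reduces (ii), (iv), and (v) to the ``monomial'' case $\phi=\1\{\cdot\cap C\ne\emptyset\}$ and then invokes linearity, whereas you work directly with the full sums and derive (ii) from the symmetric double sum obtained in (v); both routes amount to the same bookkeeping.
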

\begin{proof}
  (i) follows directly from \eqref{eq:int}. For
  \begin{equation}
    \label{eq:monom}
    \phi=\1\{\cdot\cap C\ne\emptyset\}\quad\text{and}\quad\psi=
    \1\{\cdot\cap C'\ne\emptyset\},
  \end{equation}
  we have
  \begin{equation}
    \label{eq:phipsi}
    (\phi\cdot\psi)(K)=\psi(K\cap C)=\1\{K\cap C
    \cap C'\ne\emptyset\}=\phi(K\cap C')=(\psi\cdot\phi)(K).
  \end{equation}
  The general case of (ii) follows by linearity. Statement (iii)
  directly results from
  $\chi=\1\{\cdot\cap\Rd\ne\emptyset\}$.
	
  For (iv), if $\phi=\1\{\cdot\cap C\ne\emptyset\}$, then
  $(\phi\cdot\psi)(K)=\psi(K\cap C)$, which is a $\sigma$-continuous
  valuation, then use linearity.  The equality in (iv) follows from
  \begin{displaymath}
    (\phi\cdot\psi)(\{x\})=
    \sum_{n=1}^N\alpha_n\psi(\{x\}\cap C_n)
    =\sum_{n=1}^N\alpha_n\mchi(\{x\}\cap C_n)\hs\psi(\{x\})
    =\phi(\{x\})\hs\psi(\{x\}).
  \end{displaymath}
  For (v), under \eqref{eq:monom}, the result follows from
  \eqref{eq:phipsi}. In the general case, again use linearity.
\end{proof}

The valuation $\phi\cdot\psi$ can be naturally called the
product of $\phi$ and $\psi$ for the following reason.  The
multiplication of smooth valuations introduced by S.~Alesker
\cite{A04} can be, in the translation-invariant case, succinctly
described as follows. Let $\phi_0$ stand for the volume, and
define $\phi_A=\phi_0(\cdot+A)$, $A\in\Kd$, with $+$ being the
Minkowski addition. The Alesker product is defined by setting
\begin{equation}
  \label{eq:phi_prod}
  (\phi_A\cdot\phi_B)(K)=\phi_0\bigl(\Delta(K)+A\times B\bigr),\quad K\in\Kd,
\end{equation}
where $\Delta\colon\Rd\to\Rd\times\Rd$ stands for the diagonal
embedding $x\mapsto(x,x)$. This product then extends by linearity and
continuity to all pairs of smooth translation-invariant valuations by
making use of the density of such valuations as follows from Alesker’s
irreducibility theorem.

Except for the multiples of the Euler characteristic, countably
generated valuations are neither smooth nor
translation-invariant. Therefore, to use this approach, the basic
valuation $\phi_0$ needs to be redefined. Let
$\phi_0=\1\{0\in\cdot\}$. Then
$\phi_A=\phi_0(\cdot+A)=\1\{\cdot\cap(-A)\ne\emptyset\}$ for any (not
necessarily bounded) nonempty closed convex set $A$. It follows from
\eqref{eq:phi_prod} that
\begin{displaymath}
  (\phi_A\cdot\phi_B)(K)
  =\1\bigl\{\Delta(K)\cap\bigl((-A)\times(-B)\bigr)\ne\emptyset\bigr\}
  =\1\{K\cap(-A)\cap(-B)\ne\emptyset\},\quad K\in\Kd,
\end{displaymath}
which is consistent with the description of the product given in
Proposition~\ref{prp:int}\hs(v). This is in line with the
intersectional approach to the Alesker product given in
\cite{F16} within the framework of smooth manifolds; this connection
can be made formal using generalized valuations and wave front sets,
see \cite{MR2905004}.

\section{Open problems}
\label{sec:oproblems}

In this section, we outline some open problems and conjectures. First,
a major issue is to consider the case of general dimensions. This
cannot be done by mimicking the proof of Theorem~\ref{thm:1} due to
the absence of a result relating $m$-convexity and polyconvexity in
dimensions 3 and more.

\begin{openp}
  Characterize integer-valued monotone $\sigma$-continuous valuations
  in dimensions 3 and higher.
\end{openp}

Counterexamples show that it is not possible to obtain meaningful
results for valuations which are not $\sigma$-continuous.  However,
relaxing the monotonicity condition may be interesting also in
dimension 2.

\begin{openp}
  Obtain characterization results under weaker variants of the
  monotonicity condition, e.g., assuming nonnegativity or local
  boundedness of variation in the sense of
  \begin{displaymath}
    \sup_{L\subset K,\,L\in\Kd}\phi(L)\le C_K,\quad K\in\Kd,
  \end{displaymath}  
  where $C_K$ is a constant depending on $K$.
\end{openp}

\begin{openp}
  Which property of an integer-valued monotone $\sigma$-continuous
  valuation $\phi$ ensu\-res that its representation \eqref{eq:repr}
  contains no negative terms? This question can be posed in general
  dimension, assuming that the representation \eqref{eq:repr} holds.
\end{openp}

The representation \eqref{eq:repr} can be interpreted as follows. 
Consider an integer-valued signed measure on the space of closed
convex sets in $\Rtwo$ of the form
\begin{displaymath}
  \mu=\sum_{n=1}^{N^+}\delta_{C_n^+}-
  \sum_{n=1}^{N^-}\delta_{C_n^-},
\end{displaymath}
where $\delta_C$ stands for the unit mass at $C$. By
Remark~\ref{rem:compact}, we may assume that this measure is defined
only on $\Ktwo$. Then \eqref{eq:repr} can be written in the following
integral form
\begin{displaymath}
  \phi(K)=\int_{\Ktwo}\mchi(K\cap C)\,\mu(\mathrm dC),\quad
  K\in\Ktwo.
\end{displaymath}
More generally, by \eqref{eq:sum-form}, any countably generated
valuation on $\Kd$ can be written in the same form with
$\mu=\sum_{n=1}^N\alpha_n\delta_{C_n}$ for real numbers $\alpha_n$.

We call a measure $\mu$ on $\Kd$ (with its Borel $\sigma$-algebra
generated by the Hausdorff metric) \emph{locally finite} if
$\mu(\C_K)<\infty$ for all $K\in\Kd$, where
$\C_K=\{C\in\Kd\colon K\cap C\ne\emptyset\}$. An arbitrary locally
finite signed measure $\mu$ on $\Kd$ yields a valuation by letting
\begin{equation}
  \label{eq:integral}
  \phi(K)=\int_{\Kd}\mchi(K\cap C)\,\mu(\mathrm dC)=
  \int_{\Kd}\1\{K\cap C\ne\emptyset\}\,\mu(\mathrm dC)=\mu(\C_K).
\end{equation}
Since $\C_{K_n} \downarrow \C_K$ as $K_n \downarrow K$, this valuation
is $\sigma$-continuous due to the $\sigma$-additivity of the measure
$\mu$.

\begin{openp}
  Identify $\sigma$-continuous valuations on $\Kd$ such that
  \eqref{eq:integral} holds for a locally finite signed measure $\mu$
  on $\Kd$?  
  Note that, as follows from Example~\ref{ex:non-unique}, such a
  measure need not be unique.
\end{openp}

The set of valuations admitting an integral representation of the form
\eqref{eq:integral} is far from being limited to countably generated
valuations. For instance, the $d$-dimensional volume can be expressed
in this form with a measure $\mu$ concentrated on singletons
\begin{displaymath}
  \mu(\{x\}\colon x\in B)=\lambda_d(B),\quad B\in\mathcal B(\Rd),
\end{displaymath}
where $\lambda_d$ stands for the $d$-dimensional Lebesgue measure.
Similar representations hold for intrinsic volumes.

We conjecture that \eqref{eq:integral} holds for a very broad family
of valuations. Examples~\ref{ex:nm1} and~\ref{ex:nm2} demonstrate that
this does not hold for all $\sigma$-continuous valuations, since the
families $(C_n^+)$ and $(C_n^-)$ in these examples do not satisfy the
local finiteness condition and so the measure $\mu$ is not locally
finite. This may be explained by the lack of monotonicity in these
valuations.

\begin{openp}
  Is the family of countably generated valuations dense (in some sense)
  in the space of all $\sigma$-continuous valuations? 
\end{openp}

%
%


The following problems address changing the range of values and/or the
definition domain of valuations.

\begin{openp}
  Characterize valuations taking values in other semigroups, such as
  $(\mathbb{Z}/n\mathbb{Z},+)$, $(\mathbb{Z}/n\mathbb{Z},\times)$,
  $(\mathbb{Q},+)$, etc.
\end{openp}

\begin{openp}
  Characterize integer-valued valuations on convex functions.  It is
  very likely that this can be done using our methods. 
\end{openp}

\section*{Acknowledgment}

AI was supported by the Swiss National Science Foundation, Grant
No.~229505.\\TV was supported by the Swiss National Science
Foundation, Grant No.~10001553.


\end{document}